\numberwithin{equation}{section}
\declaretheorem[numberwithin=section]{theorem}
\declaretheorem[sibling=theorem]{lemma}
\declaretheorem[sibling=theorem]{corollary}
\title{Polygonal Dissections and Reversions of Series}
\author{
        Alison Schuetz, Gwyn Whieldon
        }
\date{\today}
\theoremstyle{definition}
\newtheorem{definition}[theorem]{Definition}
\newtheorem{example}[theorem]{Example}
\newtheorem{proposition}[theorem]{Proposition}
\newtheorem{remark}[theorem]{Remark}
\newtheorem{question}[theorem]{Question}
\newcommand{\CC}{{\mathbb C}}
\newcommand{\MM}{{\mathcal M}}
\newcommand{\matching}[2]{\displaystyle \mathop{\updownarrow}^{#1}_{#2}}
\begin{document}

\subjclass[2000]{05E99}
\let\thefootnote\relax\footnotetext{Both authors were supported by the Summer Research Institute at Hood College. The second author also received supported from a Board of Associates Grant from Hood College.}
\keywords{Mandelbrot, Multibrot, Catalan, Fuss-Catalan, series reversion, Lagrange inversion}





\begin{abstract}
The Catalan numbers $C_k$ were first studied by Euler, in the context of enumerating triangulations of polygons $P_{k+2}$. Among the many generalizations of this sequence, the Fuss-Catalan numbers $C^{(d)}_k$ count enumerations of dissections of polygons $P_{k(d-1)+2}$ into $(d+1)$-gons. In this paper, we provide a formula enumerating polygonal dissections of $(n+2)$-gons, classified by partitions $\lambda$ of $[n]$. We connect these counts $a_{\lambda}$ to reverse series arising from iterated polynomials. Generalizing this further, we show that the coefficients of the reverse series of polynomials $x=z-\sum_{j=0}^{\infty} b_j z^{j+1}$ enumerate colored polygonal dissections.
\end{abstract}

\maketitle
\section{Catalan Numbers and Polygonal Partitions}\label{sec:intro}

The Catalan numbers $C_k=\frac{1}{k+1}\binom{2k}{k}$ for $n\geq 0$ are the answer to myriad counting problems (see \cite{EC1}, \cite{stan05}, \cite{MR2179858}.) For example, they count the number of triangulations of an $(k+2)$-gon, the number of noncrossing handshake-pairings of $2k$ people seated at a round table, the number of binary rooted trees with $k$ internal nodes, the number of Dyck paths of length $2k$, and the number of noncrossing partitions of $k$.

\begin{figure}[h!]
\begin{center}
\begin{tikzpicture}[scale=0.5,auto=left,vertices/.style={circle, fill=black, inner sep=0.5pt}]

\draw (0,1.8)--++(72:1)--++(144:1)--++(216:1)--++(288:1)--cycle;
\draw[color=MidnightBlue](0,1.8)++(144:1.6)--++(0:1.6);
\draw[color=MidnightBlue](0,1.8)++(144:1.6)--++(324:1.6);

\draw (2.1,1.8)--++(72:1)--++(144:1)--++(216:1)--++(288:1)--cycle;
\draw[color=MidnightBlue] (2.1,1.8)++(108:1.6)--++(252:1.6);
\draw[color=MidnightBlue] (2.1,1.8)++(108:1.6)--++(288:1.6);

\draw (4.2,1.8)--++(72:1)--++(144:1)--++(216:1)--++(288:1)--cycle;
\draw[color=MidnightBlue] (4.2,1.8)++(72:1)--++(180:1.6);
\draw[color=MidnightBlue] (4.2,1.8)++(72:1)--++(216:1.6);

\draw (1,0)--++(72:1)--++(144:1)--++(216:1)--++(288:1)--cycle;
\draw[color=MidnightBlue] (1,0)--++(144:1.6);
\draw[color=MidnightBlue] (1,0)--++(108:1.6);

\draw (3.2,0)--++(72:1)--++(144:1)--++(216:1)--++(288:1)--cycle;
\draw[color=MidnightBlue] (3.2,0)++(0:-1)--++(72:1.6);
\draw[color=MidnightBlue] (3.2,0)++(0:-1)--++(36:1.6);

\end{tikzpicture}\;\;\;\;\;\;\;\;\;\;\begin{tikzpicture}[scale=0.5,auto=left,vertices/.style={circle, fill=black, inner sep=0.75pt}]

\foreach \a/\b in {0/1.8,2.1/1.8,4.2/1.8,1/0,3.2/0} {
\draw[color=black!40] (\a,\b) circle (0.7);
\node[vertices] at (\a+0.7,\b) {};
\node[vertices] at (\a+0.5*0.7,\b+0.7*0.866) {};
\node[vertices] at (\a-0.5*0.7,\b+0.7*0.866) {};
\node[vertices] at (\a-0.7,\b) {};
\node[vertices] at (\a-0.5*0.7,\b-0.7*0.866) {};
\node[vertices] at (\a+0.5*0.7,\b-0.7*0.866) {};
}

\foreach \a/\b in {0/1.8} {
\path[color=BlueViolet] (\a+0.7,\b) edge [bend left=60] (\a+0.5*0.7,\b+0.7*0.866);
\path[color=BlueViolet] (\a-0.5*0.7,\b+0.7*0.866) edge [bend left=60] (\a-0.7,\b);
\path[color=BlueViolet] (\a-0.5*0.7,\b-0.7*0.866) edge [bend left=60] (\a+0.5*0.7,\b-0.7*0.866);
}

\foreach \a/\b in {2.1/1.8} {
\path[color=BlueViolet] (\a-0.5*0.7,\b+0.7*0.866) edge [bend right=60] (\a+0.5*0.7,\b+0.7*0.866);
\path[color=BlueViolet] (\a+0.7,\b) edge (\a-0.7,\b);
\path[color=BlueViolet] (\a-0.5*0.7,\b-0.7*0.866) edge [bend left=60] (\a+0.5*0.7,\b-0.7*0.866);
}

\foreach \a/\b in {4.2/1.8} {
\path[color=BlueViolet] (\a+0.5*0.7,\b+0.7*0.866) edge [bend left=60] (\a-0.5*0.7,\b+0.7*0.866);
\path[color=BlueViolet] (\a-0.7,\b) edge [bend left=60] (\a-0.5*0.7,\b-0.7*0.866) ;
\path[color=BlueViolet] (\a+0.5*0.7,\b-0.7*0.866) edge [bend left=60] (\a+0.7,\b);
}

\foreach \a/\b in {1/0} {
\path[color=BlueViolet] (\a+0.7,\b) edge [bend right=60] (\a+0.5*0.7,\b-0.7*0.866);
\path[color=BlueViolet] (\a-0.5*0.7,\b+0.7*0.866) edge [bend left=60] (\a-0.7,\b);
\path[color=BlueViolet] (\a-0.5*0.7,\b-0.7*0.866) edge (\a+0.5*0.7,\b+0.7*0.866);
}

\foreach \a/\b in {3.2/0} {
\path[color=BlueViolet] (\a+0.7,\b) edge [bend left=60] (\a+0.5*0.7,\b+0.7*0.866);
\path[color=BlueViolet] (\a-0.5*0.7,\b-0.7*0.866) edge [bend right=60] (\a-0.7,\b);
\path[color=BlueViolet] (\a-0.5*0.7,\b+0.7*0.866) edge (\a+0.5*0.7,\b-0.7*0.866);
}
\end{tikzpicture}\;\;\;\;\;\;\;\;\;\;\begin{tikzpicture}[xscale=0.5, yscale=-0.5,auto=left,vertices/.style={circle, fill=black, inner sep=0.6pt}]

\foreach \a/\b in {0/1.8,-0.4/2.2,0/2.6}{
\draw[color=Plum,-] (\a,\b)--(\a-0.4,\b+0.4) (\a,\b)--(\a+0.4,\b+0.4);
}

\node[vertices] at (0,1.8) {};
\foreach \a/\b in {0/1.8,-0.4/2.2,0/2.6}{
\node[vertices] at (\a-0.4,\b+0.4){};
\node[vertices] at (\a+0.4,\b+0.4){};
}

\foreach \a/\b in {2.1/1.8}{
\draw[color=Plum,-] (\a,\b)--(\a-0.4,\b+0.4) (\a,\b)--(\a+0.4,\b+0.4);
}
\foreach \a/\b in {1.7/2.2}{
\draw[color=Plum,-] (\a,\b)--(\a-0.4,\b+0.4) (\a,\b)--(\a+0.3,\b+0.4);
}
\foreach \a/\b in {2.5/2.2}{
\draw[color=Plum,-] (\a,\b)--(\a-0.3,\b+0.4) (\a,\b)--(\a+0.4,\b+0.4);
}
\node[vertices] at (2.1,1.8) {};
\foreach \a/\b in {2.1/1.8}{
\node[vertices] at (\a-0.4,\b+0.4){};
\node[vertices] at (\a+0.4,\b+0.4){};
}
\node[vertices] at (1.3,2.6){};
\node[vertices] at (2,2.6){};
\node[vertices] at (2.9,2.6){};
\node[vertices] at (2.2,2.6){};

\foreach \a/\b in {4.2/1.8,4.6/2.2,4.2/2.6}{
\draw[color=Plum,-] (\a,\b)--(\a-0.4,\b+0.4) (\a,\b)--(\a+0.4,\b+0.4);
}

\node[vertices] at (4.2,1.8) {};
\foreach \a/\b in {4.2/1.8,4.6/2.2,4.2/2.6}{
\node[vertices] at (\a-0.4,\b+0.4){};
\node[vertices] at (\a+0.4,\b+0.4){};
}

\foreach \a/\b in {1/0,0.6/0.4,0.2/0.8}{
\draw[color=Plum,-] (\a,\b)--(\a-0.4,\b+0.4) (\a,\b)--(\a+0.4,\b+0.4);
}

\node[vertices] at (1,0) {};
\foreach \a/\b in {1/0,0.6/0.4,0.2/0.8}{
\node[vertices] at (\a-0.4,\b+0.4){};
\node[vertices] at (\a+0.4,\b+0.4){};
}

\foreach \a/\b in {3.2/0,3.6/0.4,4/0.8}{
\draw[color=Plum,-] (\a,\b)--(\a-0.4,\b+0.4) (\a,\b)--(\a+0.4,\b+0.4);
}

\node[vertices] at (3.2,0) {};
\foreach \a/\b in {3.2/0,3.6/0.4,4/0.8}{
\node[vertices] at (\a-0.4,\b+0.4){};
\node[vertices] at (\a+0.4,\b+0.4){};
}

\end{tikzpicture}\\
{\tiny Triangulations of $(k+2)$-gon}\;\;\;{\tiny Noncrossing Pairings of $2k$ People}\;\;\;{\tiny Binary Rooted Trees of $k$-pairs} 
\end{center}
\vspace{10pt}
\begin{center}
\begin{tikzpicture}[scale=0.5,auto=left,vertices/.style={circle, fill=black, inner sep=0.5pt}]

\foreach \a/\b in {0/1.8,2.1/1.8,4.2/1.8,1.0/0,3.2/0}{
\draw[color=black!40,-] (\a,\b)--(\a+1.5,\b);
\foreach \n in {0,1,2,3,4,5,6}{
\draw[-] (\a+0.25*\n,\b+0.05)--(\a+0.25*\n,\b-0.05);
}};

\draw[-,color=Brown] (0,1.8)--++(0.25,0.25)--++(0.25,0.25)--++(0.25,-0.25)--++(0.25,-0.25)--++(0.25,0.25)--++(0.25,-0.25);

\draw[-,color=Brown] (2.1,1.8)--++(0.25,0.25)--++(0.25,-0.25)--++(0.25,0.25)--++(0.25,-0.25)--++(0.25,0.25)--++(0.25,-0.25);
\draw[-,color=Brown] (4.2,1.8)--++(0.25,0.25)--++(0.25,-0.25)--++(0.25,0.25)--++(0.25,0.25)--++(0.25,-0.25)--++(0.25,-0.25);
\draw[-,color=Brown] (1,0)--++(0.25,0.25)--++(0.25,0.25)--++(0.25,0.25)--++(0.25,-0.25)--++(0.25,-0.25)--++(0.25,-0.25);
\draw[-,color=Brown] (3.2,0)--++(0.25,0.25)--++(0.25,0.25)--++(0.25,-0.25)--++(0.25,0.25)--++(0.25,-0.25)--++(0.25,-0.25);

\end{tikzpicture}\;\;\;\;\;\;\;\;\;\;\begin{tikzpicture}[scale=0.5,auto=left,vertices/.style={circle, fill=black, inner sep=0.5pt}]

\filldraw[fill=Aquamarine!60] (0,1.8)--++(0,1)--++(0.3,0)--++(0,-1);
\filldraw[fill=Aquamarine!60] (0.6,1.8)--++(0,1)--++(0.9,0)--++(0,-1)--++(-0.3,0)--++(0,0.7)--++(-0.3,0)--++(0,-0.7);

\filldraw[fill=Aquamarine!60] (2.7,1.8)--++(0,0.4)--++(0.3,0)--++(0,-0.4);
\filldraw[fill=Aquamarine!60] (2.1,1.8)--++(0,1)--++(1.5,0)--++(0,-1)--++(-0.3,0)--++(0,0.7)--++(-0.9,0)--++(0,-0.7);

\filldraw[fill=Aquamarine!60] (5.4,1.8)--++(0,1)--++(0.3,0)--++(0,-1);
\filldraw[fill=Aquamarine!60] (4.2,1.8)--++(0,1)--++(0.9,0)--++(0,-1)--++(-0.3,0)--++(0,0.7)--++(-0.3,0)--++(0,-0.7);

\filldraw[fill=Aquamarine!60] (1.0,0)--++(0,1)--++(0.3,0)--++(0,-1);
\filldraw[fill=Aquamarine!60] (1.6,0)--++(0,1)--++(0.3,0)--++(0,-1);
\filldraw[fill=Aquamarine!60] (2.2,0)--++(0,1)--++(0.3,0)--++(0,-1);

\filldraw[fill=Aquamarine!60] (3.2,0)--++(0,1)--++(1.5,0)--++(0,-1)--++(-0.3,0)--++(0,0.7)--++(-0.3,0)--++(0,-0.7)--++(-0.3,0)--++(0,0.7)--++(-0.3,0)--++(0,-0.7);

\foreach \a/\b in {0/1.8,2.1/1.8,4.2/1.8,1.0/0,3.2/0}{
\foreach \n in {0,2,4}{
\draw[-] (\a+0.3*\n,\b)--(\a+0.3*\n+0.3,\b);
}};

\end{tikzpicture}\\
{\tiny Dyck Paths of Length $2k$}\;\;\;\;\;\;{\tiny Noncrossing Partitions of $[k]$}
\end{center}
\caption{Examples of Sets Counted by Catalan Number $C_3=5$}
\end{figure}
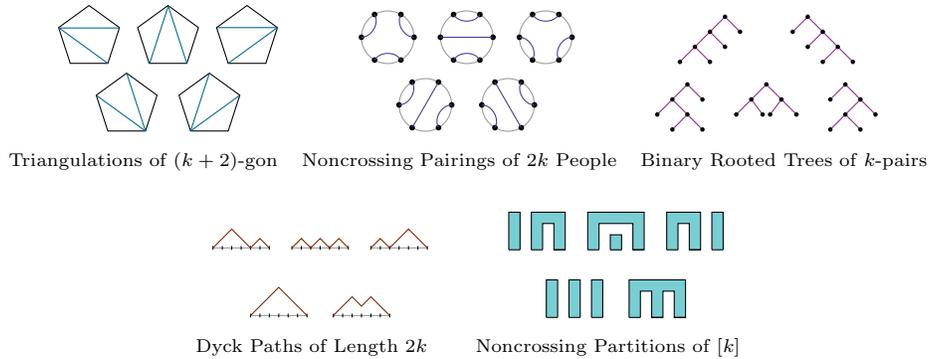

In this paper, we will alternate between the recursive definition of the Catalan numbers and the closed formula for $C_k$.

\begin{theorem}[Catalan Recursion~\cite{EC1}]\label{thm:catrec} Consider sequence $\{C_0,C_1,C_2,...\}$. Suppose $C_0=1$, and $C_{k+1}=\sum_{i=0}^{k}C_i\cdot C_{k-i}$. Then $C_k$ are the Catalan numbers, $C_k=\frac{1}{k+1}\binom{2k}{k}$.
\end{theorem}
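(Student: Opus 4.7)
The plan is to translate the recursion into a functional equation for the ordinary generating function. Setting $f(x) = \sum_{k\geq 0} C_k x^k$, the convolution $\sum_{i=0}^{k}C_iC_{k-i}$ is precisely the coefficient of $x^k$ in $f(x)^2$, so the hypothesized recurrence $C_{k+1} = \sum_{i=0}^{k} C_i C_{k-i}$ together with the initial value $C_0 = 1$ is equivalent to the single identity
\[
f(x) = 1 + x\,f(x)^2.
\]

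Next, I would solve this as a quadratic in $f(x)$. The equation $xf^2 - f + 1 = 0$ gives $f(x) = \frac{1 \pm \sqrt{1-4x}}{2x}$, and only the minus sign produces a formal power series with $f(0) = 1$, forcing $f(x) = \frac{1 - \sqrt{1-4x}}{2x}$. Expanding $\sqrt{1-4x}$ via the generalized binomial theorem and simplifying the half-integer binomial coefficients (via the standard identity $\binom{1/2}{k+1} = \frac{(-1)^k}{2(k+1)\,4^k}\binom{2k}{k}$) yields $[x^k]f(x) = \frac{1}{k+1}\binom{2k}{k}$.

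An alternative route, better attuned to the paper's theme, is Lagrange inversion. Setting $h(x) = x f(x)$, the functional equation becomes $x = h - h^2$, which exhibits $h$ as the reverse series of $\phi(y) = y - y^2$. Lagrange inversion then gives
\[
[x^n] h \;=\; \frac{1}{n}[y^{n-1}]\left(\frac{y}{\phi(y)}\right)^n \;=\; \frac{1}{n}[y^{n-1}](1-y)^{-n} \;=\; \frac{1}{n}\binom{2n-2}{n-1},
\]
and since $[x^n]h = C_{n-1}$, the substitution $k = n-1$ recovers the closed form.

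The mathematical content is genuinely light in either approach; what requires care is purely the bookkeeping, aligning the sign conventions and index shifts so that the final expression lands cleanly on $\frac{1}{k+1}\binom{2k}{k}$. I expect this simplification of the half-integer binomial coefficients (in the generating-function route) or the index shift between $h$ and $f$ (in the Lagrange route) to be the only real potential source of error.
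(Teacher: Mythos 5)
Both of your routes are correct; I checked the half-integer binomial identity (indeed $\binom{1/2}{k+1}=\frac{(-1)^k}{2(k+1)4^k}\binom{2k}{k}$, so $[x^{k+1}]\bigl(1-\sqrt{1-4x}\bigr)=\frac{2}{k+1}\binom{2k}{k}$ and division by $2x$ lands exactly on the stated closed form) and the Lagrange computation ($[y^{n-1}](1-y)^{-n}=\binom{2n-2}{n-1}$, with the shift $k=n-1$ handled correctly). The one thing to be aware of is that the paper offers no proof of this theorem at all: it is quoted from \cite{EC1} as a known fact, so there is no internal argument to compare yours against. That said, your second route is not merely ``attuned to the paper's theme'' --- it essentially \emph{is} the paper's Theorem~\ref{thm:main1} specialized to $d=2$: there the authors apply their stated Lagrange inversion formula (Theorem~\ref{thm:inversion}) to $x=z-z^d$ and read off $a_{k(d-1)}=C_k^{(d)}$, which for $d=2$ is exactly your computation with $h$ the reverse series of $y-y^2$ (the paper's version of Lagrange inversion makes the index bookkeeping trivial, since with $b_1=1$ and all other $b_j=0$ the only surviving partition of $n$ gives $a_n=\frac{1}{n+1}\binom{2n}{n}$ directly, with no half-integer binomials to simplify). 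Your first route, via the quadratic for the generating function, is the more elementary and self-contained of the two, requiring nothing beyond the generalized binomial theorem; the Lagrange route buys uniformity, since it generalizes verbatim to the Fuss--Catalan recursion of Theorem~\ref{thm:gencatrec}, which the quadratic trick does not. Either way, your proof is complete and supplies something the paper deliberately leaves to the references.
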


There is a similar recursive formula for the Fuss-Catalan numbers $C_k^{(d)}=\frac{1}{k(d-1)+1}\binom{dk}{k}$, which specializes to the Catalan numbers when $d=2$.

\begin{theorem}[Fuss-Catalan Recursion~\cite{MR0292690}]\label{thm:gencatrec} Consider sequence $\{C_0^{(d)},C_1^{(d)},C_2^{(d)},...\}$. Suppose $C_0^{(d)}=1$, and $C_{k+1}^{(d)}=\sum_{k_1+k_2+\cdots+k_d=n}C_{k_1}^{(d)}C_{k_2}^{(d)}\cdots C_{k_d}^{(d)}$. Then $C_k^{(d)}$ are the generalized Catalan (or Fuss-Catalan) numbers $C_k^{(d)}=\frac{1}{k(d-1)+1}\binom{dk}{k}$.
\end{theorem}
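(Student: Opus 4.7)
The plan is to convert the recursion into a functional equation for the ordinary generating function and then extract coefficients via Lagrange inversion. Let $F(x)=\sum_{k\geq 0} C_k^{(d)} x^k$ be the generating function for the sequence defined by the recursion in the statement (with the understanding that the index $n$ on the right-hand side of the recursion is $k$). First I would observe that the convolution identity $C_{k+1}^{(d)}=\sum_{k_1+\cdots+k_d=k} C_{k_1}^{(d)}\cdots C_{k_d}^{(d)}$ is exactly the statement that $[x^k]F(x)^d=[x^{k+1}] F(x)$ for all $k\geq 0$, which, together with the initial condition $C_0^{(d)}=1$, is equivalent to the functional equation
\[
F(x)=1+xF(x)^d.
\]

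Next I would make the substitution $y=F(x)-1$, so the functional equation becomes $y=x(1+y)^d$, i.e. $y=x\,\varphi(y)$ with $\varphi(y)=(1+y)^d$ and $\varphi(0)=1\neq 0$. This is precisely the setting in which Lagrange inversion applies, yielding
\[
[x^k]\,y \;=\; \frac{1}{k}[y^{k-1}]\varphi(y)^k \;=\; \frac{1}{k}[y^{k-1}](1+y)^{dk} \;=\; \frac{1}{k}\binom{dk}{k-1}
\]
for $k\geq 1$. Since $[x^k]F(x)=[x^k]y$ for $k\geq 1$, a short manipulation
\[
\frac{1}{k}\binom{dk}{k-1} \;=\; \frac{(dk)!}{k!\,((d-1)k+1)!} \;=\; \frac{1}{(d-1)k+1}\binom{dk}{k}
\]
gives the stated closed form, and $k=0$ is handled by the initial condition.

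The routine part is rewriting the recursion as the functional equation and the factorial bookkeeping at the end. The one step that carries real content is the invocation of Lagrange inversion; this is the main (and essentially the only) obstacle, and it is a standard tool once one recognizes $y=x(1+y)^d$ as a compositional-inverse problem. Specializing to $d=2$ recovers Theorem~\ref{thm:catrec} as a sanity check, since then $\frac{1}{k+1}\binom{2k}{k}=C_k$.
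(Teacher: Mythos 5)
Your proof is correct, but note that the paper does not actually prove this statement---it is quoted from the literature (Fuss's recursion, cited as \cite{MR0292690}) and used as a black box in the proof of Theorem~\ref{thm:bijection1}. So there is no in-paper argument to compare against; what you have done is supply the missing proof. Your route is sound: the convolution identity is exactly $[x^k]F(x)^d=[x^{k+1}]F(x)$, the functional equation $F=1+xF^d$ follows, the substitution $y=F-1$ puts it in the form $y=x\varphi(y)$ with $\varphi(0)=1$, and the binomial bookkeeping $\tfrac{1}{k}\binom{dk}{k-1}=\tfrac{1}{(d-1)k+1}\binom{dk}{k}$ checks out. (You correctly read the index $n$ in the displayed recursion as $k$; that is a typo in the statement.) It is worth observing that your argument is essentially a repackaging of machinery the paper develops later anyway: Theorem~\ref{thm:inversion} is the Lagrange inversion formula, and Theorem~\ref{thm:main1} applies it to $x=z-z^d$ to get $z=\sum_k C_k^{(d)}x^{k(d-1)+1}$, which under $z=xF(x^{d-1})$ is the same functional equation you solve. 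The paper even remarks after Corollary~\ref{cor:maincoeff} that the result ``could also be proved fairly directly via induction and the general recursion formula,'' which is the reverse direction of your implication. The only small points you should make explicit are (i) that the recursion determines the sequence uniquely, so it suffices to verify that the unique power series solution of $F=1+xF^d$ with $F(0)=1$ has the stated coefficients, and (ii) that $y=F-1$ has zero constant term, which is what licenses the Lagrange inversion step; both are immediate. No circularity arises, since Lagrange inversion is independent of the Fuss--Catalan closed form.
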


There is a well-known bijection between triangulations of $(k+2)$-gons $P_{k+2}$ and binary rooted trees with $k$ internal nodes (see \cite{MR1783940} for a history of this problem.) There is also a bijection between partitions of a $(k(d-1)+2)$-gon $P_{k(d-1)+2}$ into $(d+1)$-gons and $d$-ary trees with $k$ internal nodes (see Hilton-Pedersen~\cite{MR1098222}.) The wording of Theorem 0.2 from \cite{MR1098222} has been changed slightly to reflect the notation used in this note.

\begin{theorem}[Theorem 0.2, \cite{MR1098222}]\label{thm:bijection1}Let $P_k^d$ denote the number of ways of subdividing a convex polygon into $k$ disjoint $(d+1)$-gons by means of nonintersecting diagonals, $k\geq 1$, and let $A_k^d$ denote the number of $d$-ary trees with $k$-internal nodes, $k\geq 1$. Then $P_k^d=A_k^d=C_k^{(d)}$ for all $d\geq 2$, $k\geq 1$.
\end{theorem}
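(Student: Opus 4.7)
The plan is to prove both $P_k^d = C_k^{(d)}$ and $A_k^d = C_k^{(d)}$ by showing each sequence satisfies the Fuss-Catalan recursion of Theorem~\ref{thm:gencatrec}. To make the indexing match cleanly, I extend the definitions with the conventions $P_0^d = A_0^d = 1$, representing respectively the ``empty'' dissection (a single edge with no $(d+1)$-gon) and the empty tree.

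For $A_k^d$, the recursion is immediate from the recursive structure of $d$-ary trees: a $d$-ary tree with $k+1 \geq 1$ internal nodes consists of a root whose $d$ ordered subtrees are themselves $d$-ary trees with $k_1, \ldots, k_d$ internal nodes satisfying $k_1 + \cdots + k_d = k$. Hence $A_{k+1}^d = \sum_{k_1+\cdots+k_d=k} A_{k_1}^d \cdots A_{k_d}^d$, matching Theorem~\ref{thm:gencatrec} exactly.

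For $P_k^d$, I first fix a distinguished \emph{root edge} on the polygon $P_{k(d-1)+2}$ (say the edge between the vertices labeled $1$ and $k(d-1)+2$ under a cyclic labeling). In any dissection into $(d+1)$-gons, exactly one $(d+1)$-gon $F$ contains the root edge, and the other $d$ edges of $F$, read in cyclic order, separate the remainder into $d$ sub-polygons dissected into $k_1, \ldots, k_d$ copies of $(d+1)$-gons (with $k_i = 0$ when the corresponding edge of $F$ is itself a boundary edge of the original polygon, contributing $P_0^d = 1$). Each sub-polygon has $k_i(d-1) + 2$ vertices, so counting the boundary edges of the original polygon gives
\[
k(d-1) + 2 \;=\; 1 \;+\; \sum_{i=1}^{d}\bigl(k_i(d-1) + 1\bigr),
\]
which simplifies to $k_1 + \cdots + k_d = k - 1$. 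Summing over all choices of the $k_i$ yields the recursion $P_k^d = \sum_{k_1+\cdots+k_d=k-1} P_{k_1}^d \cdots P_{k_d}^d$, again matching Theorem~\ref{thm:gencatrec} after shifting the index by $1$.

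Since both sequences have the same initial value as $C_k^{(d)}$ and satisfy the same recurrence, induction on $k$ yields $P_k^d = A_k^d = C_k^{(d)}$. Moreover, these matching recursions give rise to a natural bijection between dissections and $d$-ary trees: the root $(d+1)$-gon $F$ corresponds to the root of the tree, its $d$ non-root edges index the $d$ ordered children, and the construction iterates on each sub-polygon. The main obstacle is the careful edge/vertex bookkeeping that forces $\sum k_i = k - 1$ and that verifies each sub-polygon really has vertex count of the form $k_i(d-1) + 2$; once that identity is pinned down, the rest is a routine induction against the Fuss-Catalan recursion.
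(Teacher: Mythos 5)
Your proof is correct, and it uses the same underlying decomposition as the paper (distinguished root edge, the unique $(d+1)$-gon containing it, and the $d$ sub-polygons it cuts off), but packages it differently. The paper runs the Fuss--Catalan recursion only once, for $d$-ary trees, and then transfers the count to dissections via an explicit bijection (root edge $\mapsto$ root, the $d$ remaining edges of the root face $\mapsto$ the $d$ children, iterated on sub-polygons). You instead run the recursion twice, verifying directly that $P_k^d$ satisfies $P_k^d=\sum_{k_1+\cdots+k_d=k-1}P_{k_1}^d\cdots P_{k_d}^d$, with the bijection relegated to a closing remark. The price of your route is the bookkeeping you correctly identify and carry out: the identity $k(d-1)+2=1+\sum_{i=1}^d\bigl(k_i(d-1)+1\bigr)$ forcing $\sum k_i=k-1$, together with the (easily induced) fact that a polygon dissectable into $k_i$ disjoint $(d+1)$-gons has exactly $k_i(d-1)+2$ vertices; your arithmetic here checks out. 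What the paper's route buys is that the bijection itself is the content --- it is reused later (Theorem~\ref{thm:dissections2} generalizes exactly this tree correspondence to arbitrary types $\lambda$), so establishing it explicitly is not wasted effort; what your route buys is that each of the two equalities $P_k^d=C_k^{(d)}$ and $A_k^d=C_k^{(d)}$ stands on its own without needing the map to be verified as a bijection. Both are complete proofs of the stated theorem.
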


\begin{proof}That the number of $d$-ary rooted trees with $k$ internal nodes can be counted by the Fuss-Catalan numbers can easily be shown inductively via the generalized Catalan recursion formula. For $k=1$, there are precisely $A_k^d=1$ such trees. For $k\geq 1$, each tree in the set $A_{k+1}^d$ consists of an internal node with $d$ branches and some rooted $d$-ary tree (possibly empty) attached to each branch (see Figure~\ref{fig:branches}.)
\begin{figure}[h!]
\begin{center}
\begin{tikzpicture}[scale=0.5,auto=left,vertices/.style={circle, fill=black, inner sep=1pt}]
\node[vertices] at (0,0){};

\foreach \xval in {-4,-2.25,-0.5,2.25,4}{
\draw[Plum,-] (0,0)--(\xval,-2);
\node[vertices] at (\xval,-2){};
\filldraw[-,black,fill=Plum!15] (\xval,-2)--++(0.6,-1)--++(0,-0.75)--++(-1.2,0)--++(0,0.75)--++(0.6,1);
}

\node at (-4,-3.2) {\tiny $A_{k_1}^d$};
\node at (-2.25,-3.2) {\tiny $A_{k_2}^d$};
\node at (-0.5,-3.2) {\tiny $A_{k_3}^d$};
\node at (1,-3.2) {\footnotesize $\cdots$};
\node at (2.25,-3.2) {\footnotesize $\cdot$};
\node at (4,-3.2) {\tiny $A_{k_d}^d$};

\end{tikzpicture}
\end{center}
\caption{Recursive Construction of $A_{k+1}^d$}
\label{fig:branches}
\end{figure}
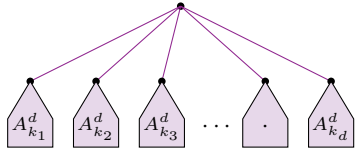

As there are $k$ remaining internal nodes to partition amongst the branches, we have $k_1+\cdots+k_d=k$, and our $d$-ary rooted trees with $(k+1)$ internal nodes must satisfy the recursion formula:
$$A_{k+1}^d=\sum_{k_1+k_2+\cdots +k_d=k}A_{k_1}^dA_{k_2}^d\cdots A_{k_d}^d$$
By Theorem~\ref{thm:gencatrec}, we have that the number of $d$-ary rooted trees $A_k^d=C_{k}^{(d)}$.

There is a bijection (illustrated in Figure~\ref{fig:bijection1}) between $d$-ary rooted trees with $k$ internal nodes and subdivisions of convex polygons into $k$ disjoint $(d+1)$-gons via diagonals. Choose one edge to correspond to the root, then draw the $d$ branches from that root to the $d$ other edges of the $(d+1)$-gon in the dissection. Continue this, treating each vertex lying on a diagonal as the new root of a sub-tree.

\begin{figure}[h!]
\begin{center}
\begin{tikzpicture}[scale=0.25,auto=left,vertices1/.style={circle, fill=black, inner sep=0.1pt},roots/.style={circle, fill=Plum, inner sep=1pt},internalnode/.style={circle, fill=Plum, inner sep=1.5pt}]

\foreach \angle/\name in {0/a, 36/b, 72/c, 108/d, 144/e, 180/f, 216/g, 252/h, 288/i, 324/j} {
\node[vertices1] (\name) at (\angle:4) {};
};

\foreach \alpha/\beta in {a/b,b/c,c/d,d/e,e/f,f/g,g/h,h/i,i/j,j/a}{
\draw[-] (\alpha)--(\beta);
};

\foreach \alpha/\beta in {b/e,e/j,f/i}{
\draw[MidnightBlue,-] (\alpha)--(\beta);
};

\node at (0,-10){};

\end{tikzpicture}\;\;\;\;\;\;\begin{tikzpicture}[scale=0.5,auto=left,vertices1/.style={circle, fill=black, inner sep=0.1pt},roots/.style={circle, fill=Plum, inner sep=1pt},internalnode/.style={circle, fill=Plum, inner sep=1.5pt}]

\foreach \angle/\name in {0/a, 36/b, 72/c, 108/d, 144/e, 180/f, 216/g, 252/h, 288/i, 324/j} {
\node[vertices1] (\name) at (\angle:4) {};
};

\foreach \alpha/\beta in {a/b,b/c,c/d,d/e,e/f,f/g,g/h,h/i,i/j,j/a}{
\draw[-] (\alpha)--(\beta);
};

\foreach \alpha/\beta in {b/e,e/j,f/i}{
\draw[MidnightBlue,-,thick] (\alpha)--(\beta);
};

\foreach \angle/\name in {0/aa, 36/bb, 108/dd, 144/ee, 180/ff, 216/gg, 252/hh, 288/ii, 324/jj} {
\node[roots] (\name) at (\angle+18:3.8) {};
};

\node[internalnode] (cc) at (72+18:3.8) {};
\node[internalnode] (kk) at (72+18:2.34) {};
\node[internalnode] (ll) at (0:0) {};
\node[internalnode] (mm) at (216+18:2.34) {};

\foreach \alpha/\beta in {cc/dd,cc/bb,cc/kk,kk/ll,kk/aa,kk/jj,ll/mm,ll/ee,ll/ii,mm/ff,mm/gg,mm/hh}{
\draw[Plum,-] (\alpha)--(\beta);
}

\end{tikzpicture}\;\;\;\;\;\;\;\;\begin{tikzpicture}[scale=0.2,auto=left,vertices1/.style={circle, fill=black, inner sep=0.1pt},roots/.style={circle, fill=Plum, inner sep=1pt},internalnode/.style={circle, fill=Plum, inner sep=1.3pt}]

\node[internalnode] (cc) at (0,0) {};
\node[roots] (bb) at (3,-3){};
\node[roots] (dd) at (-3,-3){};

\node[internalnode] (kk) at (0,-3) {};
\node[roots] (aa) at (3,-6) {};
\node[roots] (jj) at (0,-6) {};

\node[internalnode] (ll) at (-3,-6) {};
\node[roots] (ii) at (0,-9) {};
\node[roots] (ee) at (-6,-9) {};

\node[internalnode] (mm) at (-3,-9) {};
\node[roots] (ff) at (0,-12) {};
\node[roots] (gg) at (-3,-12) {};
\node[roots] (hh) at (-6,-12) {};

\foreach \alpha/\beta in {cc/dd,cc/bb,cc/kk,kk/ll,kk/aa,kk/jj,ll/mm,ll/ee,ll/ii,mm/ff,mm/gg,mm/hh}{
\draw[Plum,-] (\alpha)--(\beta);
}

\node at (0,-18){};
\end{tikzpicture}
\end{center}
\caption{Bijection between $(d+1)$ dissections and rooted $d$-ary trees.}
\label{fig:bijection1}
\end{figure}
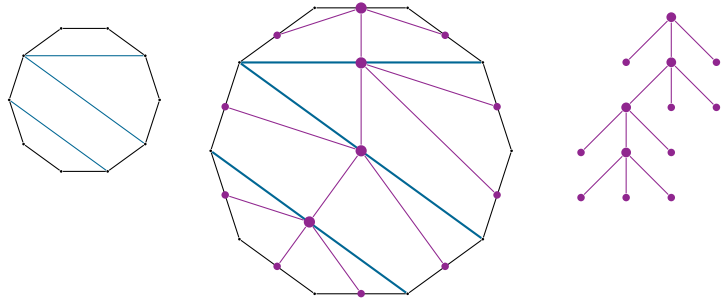

This process is easy to reverse (constructing a unique polygonal dissection from a rooted tree), i.e. given a polygonal dissection into $(d+1)$-gons, we have a unique $d$-ary rooted tree, and vice versa. This provides our desired bijection, and $P_k^d=A_k^d=C_k^{(d)}$.
\end{proof}

There have been several papers enumerating general polygonal dissections of $n$-gons with $k$ nonintersecting diagonals (see \cite{motzkin}, \cite{ronread} and \cite{MR2252931}.) We consider here a case that does not currently appear in the literature: enumerating polygonal dissections of convex $n$-gons $P_n$, where each piece of the dissection must be a $(d+1)$-gon, where $d\in\{d_1,d_2,...,d_k\}$ for fixed, distinct integers $d_i\geq 2$. To standardize terminology and indices for \emph{polygonal dissections}, we include precise definitions here.

\begin{definition}[Polygonal Dissections] A \emph{polygonal dissection} of a convex $n$-gon is the union of the polygon and any nonintersecting subset of its diagonals. A \emph{$d$-dissection} (respectively, a \emph{$(d_1,d_2,...,d_r)$-dissection}) is a polygonal dissection such that the regions formed by the dissection are all convex $(d+1)$-gons (respectively, each region is a $(d+1)$-gon, where $d\in\{d_1,d_2,...,d_r\}$).
\end{definition}

\begin{definition}[Type of a Polygonal Dissection] Let $\lambda$ be a partition of $n$ with $k_j$ parts of size $j$. We say a dissection of an $(n+2)$-gon consisting of $k_j$ $(j+2)$-gons is a \emph{polygonal dissection of type $\lambda$}, and denote the set of all such polygonal dissections as $P_{\lambda,n}$.
\end{definition}
\begin{figure}[h!]
\begin{center}
\begin{tikzpicture}[scale=0.35,auto=left,vertices1/.style={circle, fill=black, inner sep=0.1pt},roots/.style={circle, fill=Plum, inner sep=1pt},internalnode/.style={circle, fill=Plum, inner sep=1.5pt}]

\foreach \angle/\name in {0/a, 36/b, 72/c, 108/d, 144/e, 180/f, 216/g, 252/h, 288/i, 324/j} {
\node[vertices1] (\name) at (\angle:4) {};
};

\foreach \alpha/\beta in {a/b,b/c,c/d,d/e,e/f,f/g,g/h,h/i,i/j,j/a}{
\draw[-] (\alpha)--(\beta);
};

\foreach \alpha/\beta in {b/e,e/j,f/i}{
\draw[MidnightBlue,-] (\alpha)--(\beta);
};

\end{tikzpicture}\;\;\;\;\;\;\;\;\;\;\;\;\;\;\begin{tikzpicture}[scale=0.35,auto=left,vertices1/.style={circle, fill=black, inner sep=0.1pt},roots/.style={circle, fill=Plum, inner sep=1pt},internalnode/.style={circle, fill=Plum, inner sep=1.5pt}]

\foreach \angle/\name in {0/a, 36/b, 72/c, 108/d, 144/e, 180/f, 216/g, 252/h, 288/i, 324/j} {
\node[vertices1] (\name) at (\angle:4) {};
};

\foreach \alpha/\beta in {a/b,b/c,c/d,d/e,e/f,f/g,g/h,h/i,i/j,j/a}{
\draw[-] (\alpha)--(\beta);
};

\foreach \alpha/\beta in {i/b,f/i,b/f}{
\draw[MidnightBlue,-] (\alpha)--(\beta);
};

\end{tikzpicture}
\end{center}
\caption{A $3$-dissection (left) of type $\lambda=2+2+2+2$ and a $(2,3,4)$-dissection (right) of type $\lambda=3+2+2+1$ of a $10$-gon.}
\end{figure}
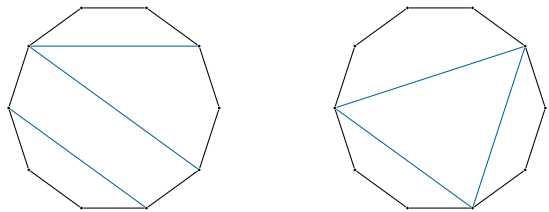

Polygonal dissections of an $(n(d-1)+1)$-gon into $(d+1)$-gons are in bijection with $d$-ary rooted trees with $n+1$ internal nodes, as shown in Theorem~\ref{thm:bijection1}. Similarly, polygonal dissections of type $\lambda$ are in bijection with rooted plane trees with a particular downdegree sequence.

\begin{definition}[Rooted Trees and Downdegree Sequences] A \emph{rooted plane tree} is a tree $T$ with a distinguished vertex called the \emph{root}. The \emph{downdegree sequence} ${\bf r}=(r_0,r_1,r_2,....,r_n)$ of a rooted tree counts the number of vertices $r_j$ with $j$ neighbors further away from the root than the vertex itself.
\end{definition}

\begin{figure}[h!]
\begin{center}
\begin{tikzpicture}[scale=0.2,auto=left,vertices1/.style={circle, fill=black, inner sep=0.1pt},roots/.style={circle, fill=Plum, inner sep=1pt},internalnode/.style={circle, fill=Plum, inner sep=1.3pt}]

\node[internalnode] (aa) at (0,0) {};

\node[internalnode] (bb) at (-6,-3){};
\node[internalnode] (cc) at (-2,-3){};
\node[roots] (dd) at (2,-3){};
\node[internalnode] (ee) at (6,-3){};

\node[roots] (ff) at (-7,-6){};
\node[internalnode] (gg) at (-6,-6){};
\node[roots] (hh) at (-5,-6){};
\node[roots] (ii) at (-3,-6){};
\node[internalnode] (jj) at (-1,-6){};
\node[roots] (kk) at (4.5,-6){};
\node[roots] (ll) at (5.5,-6){};
\node[roots] (mm) at (6.5,-6){};
\node[roots] (nn) at (7.5,-6){};

\node[roots] (oo) at (-5.25,-9){};
\node[roots] (pp) at (-6.75,-9){};

\node[roots] (qq) at (-3,-9){};
\node[roots] (rr) at (-2,-9){};
\node[roots] (ss) at (-1,-9){};
\node[roots] (tt) at (0,-9){};
\node[internalnode] (uu) at (1,-9){};

\node[roots] (vv) at (0,-12){};
\node[roots] (ww) at (1,-12){};
\node[roots] (xx) at (2,-12){};

\node at (0,-15){};

\foreach \alpha/\beta in {aa/bb,aa/cc,aa/dd,aa/ee,bb/ff,bb/gg,bb/hh,cc/ii,cc/jj,ee/kk,ee/ll,ee/mm,ee/nn,gg/oo,gg/pp,jj/qq,jj/rr,jj/ss,jj/tt,jj/uu,uu/vv,uu/ww,uu/xx}{
\draw[Plum,-] (\alpha)--(\beta);
}
\end{tikzpicture}\;\;\;\;\;\;\;\;\;\;\;\;\;\;\;\;\;\;\;\;\;\;\begin{tikzpicture}[scale=0.2,auto=left,vertices1/.style={circle, fill=black, inner sep=0.1pt},roots/.style={circle, fill=Plum, inner sep=1pt},internalnode/.style={circle, fill=Plum, inner sep=1.3pt}]

\node[internalnode] (aa) at (0,0) {};
\node[internalnode] (bb) at (-6,-3){};
\node[roots] (cc) at (0,-3){};
\node[internalnode] (dd) at (6,-3){};

\node[internalnode] (ee) at (-8.5,-6){};
\node[internalnode] (ff) at (-3.5,-6){};
\node[roots] (gg) at (4.5,-6){};
\node[roots] (hh) at (5.5,-6){};
\node[roots] (ii) at (6.5,-6){};
\node[roots] (jj) at (7.5,-6){};

\node[roots] (kk) at (-10.5,-9){};
\node[roots] (ll) at (-9.5,-9){};
\node[roots] (mm) at (-8.5,-9){};
\node[roots] (nn) at (-7.5,-9){};
\node[roots] (oo) at (-6.5,-9){};

\node[roots] (pp) at (-4.5,-9){};
\node[internalnode] (qq) at (-2.5,-9){};

\node[roots] (rr) at (-4,-12){};
\node[roots] (ss) at (-3,-12){};
\node[roots] (tt) at (-2,-12){};
\node[internalnode] (uu) at (-1,-12){};

\node[roots] (vv) at (-2,-15){};
\node[roots] (ww) at (-1,-15){};
\node[roots] (xx) at (0,-15){};

\foreach \alpha/\beta in {aa/bb,aa/cc,aa/dd,bb/ee,bb/ff,dd/gg,dd/hh,dd/ii,dd/jj,ee/kk,ee/ll,ee/mm,ee/nn,ee/oo,ff/pp,ff/qq,qq/rr,qq/ss,qq/tt,qq/uu,uu/vv,uu/ww,uu/xx}{
\draw[Plum,-] (\alpha)--(\beta);
}
\end{tikzpicture}
\end{center}
\caption{Rooted planar trees with downdegree sequence $(17,0,2,2,2,1,0,0,\ldots,0)$}
\label{fig:downdegree1}
\end{figure}
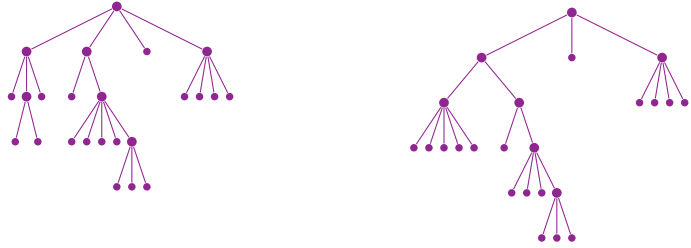

\begin{theorem}\label{thm:dissections2} Let $P_{\lambda,n}$ be the number of all polygonal dissections of type $\lambda$, where $\lambda$ is a partition of $n$ with $k_j$ parts of size $j$ and $n$ total parts. Let $T_{{\bf r},m}$ be the number of rooted plane trees on $m+1$ vertices with downdegree sequence ${\bf r}=(r_0,r_1,r_2,...,r_m)$. Then $P_{\lambda,n}=T_{{\bf r},m}$ for ${\bf r}=(n+1,0,k_1,k_2,...,k_n)$ and $m=n+k$.
\end{theorem}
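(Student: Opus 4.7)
The plan is to generalize the bijection in Theorem~\ref{thm:bijection1} to the setting of arbitrary type-$\lambda$ dissections. I would fix a distinguished root edge $e_0$ of the convex $(n+2)$-gon and associate to each dissection $D$ of type $\lambda$ a rooted plane tree $T(D)$ built by placing one internal vertex inside each polygonal region and one leaf on each boundary edge of the $(n+2)$-gon other than $e_0$. The root of $T(D)$ is the internal vertex of the region containing $e_0$, and for every other region the \emph{parent edge} is the diagonal separating it from its parent region. The children of each internal vertex $v$ (corresponding to region $R$) would be ordered by traversing the non-parent edges of $R$ in the cyclic order inherited from the plane embedding, starting just after the parent/root edge: a diagonal yields an internal-vertex child, while a non-root boundary edge yields a leaf.

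The next step is to verify the parameters. A $(j+2)$-gon has $j+2$ edges, exactly one of which is its parent/root edge, so its internal vertex has downdegree $j+1$. Since $\lambda$ contributes $k_j$ parts of size $j$, this gives $r_{j+1}=k_j$ for all $j\geq 1$; no region is a $2$-gon, forcing $r_1=0$; and the $n+1$ non-root boundary edges of the $(n+2)$-gon contribute $r_0=n+1$. The total vertex count becomes $(n+1)+\sum_j k_j = n+k+1$, matching $m+1$ with $m=n+k$. As a consistency check, $\sum_j j\cdot r_j = \sum_{j\geq 1}(j+1)k_j = n+k$, which agrees with the required edge count $m$.

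Finally, I would show the construction is reversible: given a rooted plane tree with downdegree sequence ${\bf r}$, replace each internal vertex of downdegree $j+1$ by a $(j+2)$-gon with one designated parent/root edge, and glue the polygons together in the order dictated by the plane embedding of $T$ to recover a dissection of an $(n+2)$-gon of type $\lambda$. The main obstacle is checking that this inverse is well-defined: the cyclic planar order of children at each internal vertex of $T$ must translate into a unique cyclic order of non-parent edges around the corresponding polygon, and the choice of $e_0$ must pin down the global orientation of the $(n+2)$-gon. Once this uniqueness is verified, the two maps are mutually inverse, giving $P_{\lambda,n}=T_{{\bf r},m}$.
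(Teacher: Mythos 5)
Your proposal is correct and follows essentially the same route as the paper: both build the dual rooted plane tree of the dissection from a distinguished root edge, read off the downdegree sequence region by region (a $(j+2)$-gon contributing downdegree $j+1$, boundary edges contributing the $n+1$ leaves), and invert by regluing polygons. The only cosmetic difference is that you place internal vertices inside the regions while the paper places them on the entry edges, which yields the same abstract tree; your explicit attention to the cyclic ordering of children and the edge-count check $\sum_j j\,r_j=m$ are welcome additions but do not change the argument.
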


\begin{proof} Let $\lambda$ be a partition of $n$ as above, and fix a polygonal dissection of type $\lambda\in P_{\lambda,n}$. We construct our tree in $T_{{\bf r},n+k}$ recursively as follows:

Choose an edge of the $(n+2)$-gon and place a root $v$ there. This will be an edge of \emph{some} $(j+2)$-gon in the dissection with $k_j\geq 1$ in $\lambda$. Place a vertex on each of the $j+1$ other sides of the $(j+2)$-gon, and connect each of these vertices to the original root vertex. The root note will have $(j+1)$ neighbors further from the root, contributing one to the value of $r_{j+1}$ in the downdegree sequence of the rooted tree.

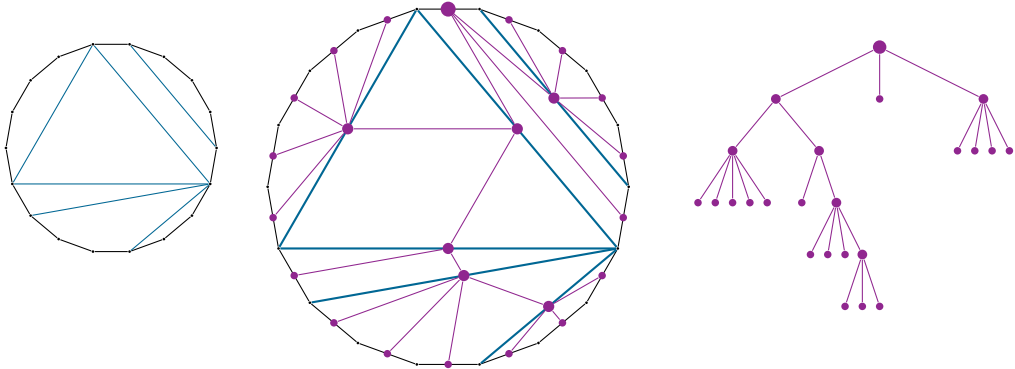
\begin{figure}[h!]
\begin{center}
\begin{tikzpicture}[scale=0.35,auto=left,vertices1/.style={circle, fill=black, inner sep=0.1pt},roots/.style={circle, fill=Plum, inner sep=1pt},internalnode/.style={circle, fill=Plum, inner sep=1.5pt}]

\foreach \angle/\name in {0/a,20/b,40/c,60/d,80/e,100/f,120/g,140/h,160/i,180/j,200/k,220/l,240/m,260/n,280/o,300/p,320/q,340/r} {
\node[vertices1] (\name) at (\angle:4) {};
};

\foreach \alpha/\beta in {a/b,b/c,c/d,d/e,e/f,f/g,g/h,h/i,i/j,j/k,k/l,l/m,m/n,n/o,o/p,p/q,q/r,r/a}{
\draw[-] (\alpha)--(\beta);
};

\foreach \alpha/\beta in {a/e,r/f,f/k,k/r,r/l,r/o}{
\draw[MidnightBlue,-] (\alpha)--(\beta);
};

\node at (0,-8){};

\end{tikzpicture}\;\;\;\;\;\;\begin{tikzpicture}[scale=0.6,auto=left,vertices1/.style={circle, fill=black, inner sep=0.1pt},roots/.style={circle, fill=Plum, inner sep=1pt},internalnode/.style={circle, fill=Plum, inner sep=1.5pt},rootnode/.style={circle, fill=Plum, inner sep=2pt}]

\node[circle, color=Plum, fill=white, inner sep=4] at (90:3.939){};

\foreach \angle/\name in {0/a,20/b,40/c,60/d,80/e,100/f,120/g,140/h,160/i,180/j,200/k,220/l,240/m,260/n,280/o,300/p,320/q,340/r} {
\node[vertices1] (\name) at (\angle:4) {};
};

\foreach \alpha/\beta in {a/b,b/c,c/d,d/e,e/f,f/g,g/h,h/i,i/j,j/k,k/l,l/m,m/n,n/o,o/p,p/q,q/r,r/a}{
\draw[-] (\alpha)--(\beta);
};

\foreach \alpha/\beta in {a/e,r/f,f/k,k/r,r/l,r/o}{
\draw[MidnightBlue,-,thick] (\alpha)--(\beta);
};

\foreach \angle/\name/\distance in {90/aa/3.939}{
\node[rootnode] (\name) at (\angle:\distance){};
}

\foreach \angle/\name/\distance in {90/aa/3.939,40/dd/3.06,40/bb/1.999,150/ee/2.57,270/ff/1.368,280/qq/1.999,310/uu/3.464}{
\node[internalnode] (\name) at (\angle:\distance){};
}

\foreach \angle/\name/\distance in {10/jj/3.939,30/ii/3.939,50/hh/3.939,70/gg/3.939,110/kk/3.939,130/ll/3.939,150/mm/3.939,170/nn/3.939,190/oo/3.939,210/pp/3.939,230/rr/3.939,250/ss/3.939,270/tt/3.939,290/vv/3.939,310/ww/3.939,330/xx/3.939,350/cc/3.939}{
\node[roots] (\name) at (\angle:\distance){};
}

\foreach \alpha/\beta in {aa/bb,aa/cc,aa/dd,bb/ee,bb/ff,dd/gg,dd/hh,dd/ii,dd/jj,ee/kk,ee/ll,ee/mm,ee/nn,ee/oo,ff/pp,ff/qq,qq/rr,qq/ss,qq/tt,qq/uu,uu/vv,uu/ww,uu/xx}{
\draw[Plum,-] (\alpha)--(\beta);
}

\end{tikzpicture}\;\;\;\;\;\;\;\;\begin{tikzpicture}[scale=0.23,auto=left,vertices1/.style={circle, fill=black, inner sep=0.1pt},roots/.style={circle, fill=Plum, inner sep=1pt},internalnode/.style={circle, fill=Plum, inner sep=1.3pt},rootnode/.style={circle, fill=Plum, inner sep=1.8pt}]

\node[rootnode] (aa) at (0,0) {};
\node[internalnode] (bb) at (-6,-3){};
\node[roots] (cc) at (0,-3){};
\node[internalnode] (dd) at (6,-3){};

\node[internalnode] (ee) at (-8.5,-6){};
\node[internalnode] (ff) at (-3.5,-6){};
\node[roots] (gg) at (4.5,-6){};
\node[roots] (hh) at (5.5,-6){};
\node[roots] (ii) at (6.5,-6){};
\node[roots] (jj) at (7.5,-6){};

\node[roots] (kk) at (-10.5,-9){};
\node[roots] (ll) at (-9.5,-9){};
\node[roots] (mm) at (-8.5,-9){};
\node[roots] (nn) at (-7.5,-9){};
\node[roots] (oo) at (-6.5,-9){};

\node[roots] (pp) at (-4.5,-9){};
\node[internalnode] (qq) at (-2.5,-9){};

\node[roots] (rr) at (-4,-12){};
\node[roots] (ss) at (-3,-12){};
\node[roots] (tt) at (-2,-12){};
\node[internalnode] (uu) at (-1,-12){};

\node[roots] (vv) at (-2,-15){};
\node[roots] (ww) at (-1,-15){};
\node[roots] (xx) at (0,-15){};

\foreach \alpha/\beta in {aa/bb,aa/cc,aa/dd,bb/ee,bb/ff,dd/gg,dd/hh,dd/ii,dd/jj,ee/kk,ee/ll,ee/mm,ee/nn,ee/oo,ff/pp,ff/qq,qq/rr,qq/ss,qq/tt,qq/uu,uu/vv,uu/ww,uu/xx}{
\draw[Plum,-] (\alpha)--(\beta);
}
\node at (0,-18){};

\end{tikzpicture}
\end{center}
\caption{Bijection between a dissection of an $18$-gon of type $\lambda=1+1+2+2+3+3+4=16$ and a rooted planar tree with downdegree sequence $d=(17,0,2,2,2,1,0,0,...,0)$.}
\label{fig:bijection2}
\end{figure}

Repeat this process for each of the edges in the dissection now connected to $v$. If a node is connected to an edge in the boundary of the $(n+2)$-gon, it will contribute one to the value of $r_0$ in the downdegree sequence (as it will be a leaf of the rooted tree, and has no further neighbors.) If a node is \emph{not} on the boundary of the $(n+2)$-gon, treat it as the new root node of a subtree, and repeat the first step.

For each $k_j\geq 1$ in $\lambda$, we will have $k_j=r_{j+1}$ vertices with downdegree $j$, and as each boundary edge of our $(n+2)$-gon (excepting our first root node) will have a leaf vertex placed on it, we must have $r_0=(n+2)-1=n+1$. The tree constructed has $k$ internal vertices, one for the root and $k-1$ for the diagonals, and $n+1$ leaves, so we have $n+k+1$ total vertices in our tree. Note that no vertices in this tree will have downdegree 1, so $r_1=0$.  So our constructed tree is in $T_{{\bf r},m}$ for ${\bf r}=(n+1,0,k_1,k_2,...,k_n)$ and $m=n+k$.

Note also that this bijection can easily be reversed: Given a rooted tree in $T_{{\bf r},n+k}$ with $k$ internal nodes each with some downdegree $j\geq 2$, we may place the internal node and its neighbors on the edges of a $(j+1)$-gon. Glue a pair of these polygons together along an edge if they share a vertex, and shift the resulting shape so that all edges that are unmatched form the boundary of a convex polygon. Of the internal nodes, only the original root node will appear on the boundary of this polygon. As there were $n+k+1$ original vertices and $k$ internal vertices, there must be $n+1$ vertices on the boundary apart from the root node. So we have constructed a polygonal dissection of an $(n+2)$-gon with $k_j$ $(j+2)$-gons, and our bijection is complete.
\end{proof}

Using results of Kreweras~\cite{kreweras} and Armstrong-Eu~\cite{armstrong}, the count for the number of rooted trees with a fixed downdegree sequence is known:

\begin{theorem}[Theorem 1.1~\cite{rhoades}]\label{thm:count1} Let $n\geq 1$, ${\bf v}=(1,2,...,n)$ and ${\bf r}=(r_1,...,r_n)$, such that ${\bf v}\cdot {\bf r}=n$. Set ${\bf r}!=r_1!r_2!\cdots r_n!$ and $|{\bf r}|=\sum r_j$. Then the number of rooted plane trees with $n+1$ vertices and downdegree sequence $(n-|{\bf r}|+1,r_1,r_2,...,r_n)$ is
$$A_{\bf r}({\bf v})=\frac{1}{1+n}\frac{(1+n)_{|{\bf r}|}}{{\bf r}!},$$
where $(y)_k=y(y-1)\cdots(y-k+1)=\frac{y!}{(y-k)!}$ is the falling factorial.
\end{theorem}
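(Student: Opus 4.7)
The plan is to prove the counting formula by a cycle-lemma argument applied to preorder encodings of plane trees. First, I would recall the classical bijection between rooted plane trees on $n+1$ vertices and words $(d_1, d_2, \ldots, d_{n+1})$ of nonnegative integers with $\sum_i d_i = n$ satisfying the ballot-type condition $d_1 + d_2 + \cdots + d_k \geq k$ for all $1 \leq k \leq n$: given a tree $T$, traverse in depth-first (preorder) and record at position $i$ the outdegree of the $i$-th vertex visited. The inverse reads the word left to right, attaching each entry as the next child of the deepest parent still expecting children. Under this bijection, the multiplicity of the symbol $j$ in the word equals $r_j$, the number of vertices with $j$ children, so trees with downdegree sequence $(r_0, r_1, \ldots, r_n)$ correspond exactly to ballot-valid words over $\{0, 1, \ldots, n\}$ with $r_j$ copies of $j$.

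Next, I would invoke the cycle lemma of Dvoretzky--Motzkin: among the $n+1$ cyclic shifts of any word of length $n+1$ over the nonnegative integers whose entries sum to $n$, exactly one satisfies the ballot condition. A quick justification passes to the walk $S_k = \sum_{i \leq k}(d_i - 1)$, which ends at $-1$; the unique valid shift begins immediately after a position achieving the strict minimum of $S$, since this makes the reindexed walk stay nonnegative until its final step. The total number of words of length $n+1$ over $\{0, 1, \ldots, n\}$ with $r_j$ copies of $j$ is the multinomial coefficient $\binom{n+1}{r_0, r_1, \ldots, r_n}$, so the number of valid trees is
$$
\frac{1}{n+1}\binom{n+1}{r_0, r_1, \ldots, r_n} = \frac{1}{n+1}\cdot\frac{(n+1)!}{r_0!\,r_1!\cdots r_n!}.
$$

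To match the stated form I would use $r_0 = n - |{\bf r}| + 1$, so $n + 1 - r_0 = |{\bf r}|$ and hence $(n+1)!/r_0! = (n+1)_{|{\bf r}|}$; this collapses the expression to $\tfrac{1}{1+n}\tfrac{(1+n)_{|{\bf r}|}}{{\bf r}!}$, as claimed.

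The only real subtlety is verifying the cycle lemma in the variant where the shifted walk ends at $-1$ rather than the more familiar strictly-positive form, but this is a standard adjustment of the minimum-position argument. An alternative proof via Lagrange inversion applied to the functional equation $T = \sum_{j \geq 0} x_j T^j$ (where $x_j$ marks a vertex of outdegree $j$) also succeeds, but recovering the multinomial form through multivariate coefficient extraction is markedly messier than the direct cycle-lemma bijection, so I would prefer the latter.
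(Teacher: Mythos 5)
Your argument is correct, but note that the paper does not prove this statement at all: it is quoted verbatim as Theorem 1.1 of Rhoades (resting on Kreweras and Armstrong--Eu) and used as a black box in the proof of Lemma~\ref{lem:fixpart}. So there is no ``paper proof'' to match; what you have supplied is a self-contained derivation of the cited result, and it is the standard one. The chain preorder-encoding $\to$ \L{}ukasiewicz word $\to$ cycle lemma $\to$ multinomial count does give
$\frac{1}{n+1}\binom{n+1}{r_0,r_1,\ldots,r_n}$ with $r_0=n-|{\bf r}|+1$, which collapses to $\frac{1}{1+n}\frac{(1+n)_{|{\bf r}|}}{{\bf r}!}$ exactly as you say. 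Two small points deserve more care than your sketch gives them. First, the valid rotation begins after the \emph{first} position at which the walk $S_k=\sum_{i\le k}(d_i-1)$ attains its minimum; calling this a ``strict minimum'' is slightly off, since the minimum can be attained several times (e.g.\ the word $(0,1)$), and the first-occurrence choice is what makes the wrap-around partial sums nonnegative. Second, dividing the multinomial by $n+1$ requires that the $n+1$ cyclic shifts of each word be pairwise distinct; this holds because the steps $d_i-1$ sum to $-1$, so a nontrivial period $p\mid n+1$ would force the sum over one period to be the non-integer $-p/(n+1)$. With those two sentences added, the proof is complete, and it is arguably a nice addition precisely because the paper leans on this count without reproducing any justification for it.
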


For a list of several other related classes of connected Catalan-type objects, enumerated by type and counted by the same formula, see the recent paper~\cite{rhoades} by Rhoades. From Theorem~\ref{thm:dissections2}, we have added a new class of objects (polygonal dissections of type $\lambda$) to their list. We will make use the of count provided by this bijection to prove our main theorem, showing the connection between the coefficients of reverse series of certain types and polygonal dissections.

\begin{restatable}[Polygonal Partitions]{theorem}{mainone}\label{thm:partitions}
The polynomial $x=z-\sum_{i=1}^r z^{d_i}$ with $2\leq d_1<d_2<\cdots<d_r$ has reverse series $z=\sum_{k=0}^{\infty} a_n x^{n+1}$ where $a_n$ counts the number of $(d_1,d_2,...,d_r)$-dissections of a convex $(n+2)$-gon.
\end{restatable}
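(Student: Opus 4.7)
The plan is to encode $(d_1,\ldots,d_r)$-dissections as rooted plane trees via the bijection of Theorem~\ref{thm:dissections2}, write down a functional equation for the associated generating function, and then observe that this functional equation is precisely the one defining the reverse series in the statement.

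First, I would let $G(x) = \sum_{m \geq 1} g_m x^m$ be the ordinary generating function for rooted plane trees in which every non-leaf vertex has downdegree in $\{d_1, d_2, \ldots, d_r\}$, with $x$ marking the number of leaves. Summing the count provided by Theorem~\ref{thm:dissections2} over all partitions $\lambda$ of $n$ whose parts are drawn from $\{d_1 - 1, d_2 - 1, \ldots, d_r - 1\}$ (i.e., those $\lambda$ for which every piece in the associated dissection is a $(d_i+1)$-gon), the coefficient $g_{n+1}$ equals the total number of $(d_1,\ldots,d_r)$-dissections of a convex $(n+2)$-gon.

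Next, I would derive a functional equation for $G(x)$ by a recursive symbolic decomposition of the tree class: every such tree is either a single leaf (contributing $x$), or its root has downdegree $d_i$ for exactly one $i \in \{1,\ldots,r\}$, with $d_i$ ordered subtrees of the same class rooted at its children (contributing $G(x)^{d_i}$). This yields
\[
G(x) \;=\; x + \sum_{i=1}^{r} G(x)^{d_i}, \qquad \text{equivalently,} \qquad x \;=\; G(x) - \sum_{i=1}^{r} G(x)^{d_i}.
\]
Since $G(0) = 0$ and $G'(0) = 1$, the series $z = G(x)$ is the unique compositional inverse of $x = z - \sum_{i=1}^{r} z^{d_i}$. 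Writing the reverse series as $z = \sum_{n \geq 0} a_n x^{n+1}$ then forces $a_n = g_{n+1}$, which is the claimed count.

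The main bookkeeping item will be to verify that the recursive decomposition captures exactly the tree class in question: the constraint that every non-leaf vertex, including the root, has downdegree in $\{d_1,\ldots,d_r\}$ is preserved because subtrees inherit the property, and conversely, attaching a new root of downdegree $d_i$ to $d_i$ such subtrees produces a tree of the same class. The only slightly confusing edge case is the $n=0$ term: the single-vertex tree gives $g_1 = 1$, which matches the fact that reverting $x = z - \sum z^{d_i}$ with $d_1 \geq 2$ begins $z = x + O(x^{d_1})$, so the convention $a_0 = 1$ corresponds to the trivial (empty) dissection. Once this decomposition and the edge case are in place, the theorem is immediate from Theorem~\ref{thm:dissections2} and the uniqueness of the reversion.
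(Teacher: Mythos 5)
Your proof is correct, but it follows a genuinely different route from the paper. The paper first establishes Lemma~\ref{lem:fixpart}, a closed-form count $a_{\lambda}=\frac{1}{n+1}\binom{n+k}{k}\binom{k}{k_1,\ldots,k_r}$ of dissections of each fixed type $\lambda$, by combining the tree bijection of Theorem~\ref{thm:dissections2} with the Kreweras/Armstrong--Eu enumeration of rooted plane trees by downdegree sequence (Theorem~\ref{thm:count1}); it then applies the Lagrange inversion formula (Theorem~\ref{thm:inversion}) to $x=z-\sum_i z^{d_i}$ and observes that the resulting expression for $a_n$ is term-by-term the sum $\sum_{\lambda}a_{\lambda}$. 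You instead use only the bijection from Theorem~\ref{thm:dissections2}, decompose the tree class symbolically at the root to get $G(x)=x+\sum_{i=1}^r G(x)^{d_i}$, and identify $G$ with the compositional inverse by uniqueness --- no Lagrange inversion and no external tree-counting formula needed. Your argument is shorter and more self-contained (and extends immediately to the colored version of Theorem~\ref{thm:coloredpartitions} by weighting the term $G^{d_i}$ with $b_{d_i}$), but it proves only the aggregate statement; the paper's detour through Lemma~\ref{lem:fixpart} buys the explicit refined count $a_{\lambda}$ per dissection type, which is one of the paper's advertised results and is reused later. Two small points to tighten: Theorem~\ref{thm:dissections2} supplies a bijection (an equality of cardinalities), not a count per se, so you should say you are summing the bijection over the admissible types $\lambda$; and you should note explicitly that the trees arising from $(d_1,\ldots,d_r)$-dissections are exactly those with every internal downdegree in $\{d_1,\ldots,d_r\}$ because a $(d_i+1)$-gonal piece corresponds to an internal node of downdegree $d_i$, which is what licenses the root decomposition.
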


Before delving into the connections between series reversions and polygonal dissections, we examine in Section~\ref{sec:mandelbrot} the initial problem that led us to consider reversions of series of the form $x=z-z^d$.

\section{Iterated Mandelbrot Polynomials and Reversions of Series}\label{sec:mandelbrot}

The work in this paper began initially as a study of the coefficients of iterated Mandelbrot (and $d$-multibrot) polynomials.

\begin{definition}[Mandelbrot and $d$-multibrot Polynomials]\label{def:mandelpoly} For variables $x,z\in\CC$, we define the Mandelbrot polynomial $f_x$ by $f_x(z)=z^2+x$. For $d\geq 2$, we define the $d$-multibrot polynomial $f_{d}$ to be the map $f_{d,x}(z)=z^d+x$.
\end{definition}

Of particular interest in complex dynamics is the orbit of $0$ under $f_{x}$ or $f_{d,x}$.  We were interested in a formula for the coefficients of the power series in $x$ of the infinitely iterated $d-$multibrot polynomial
$$f^{(\infty)}_d(x)=\lim_{n\rightarrow\infty} f^{(n)}_d(x),$$
where $f_d^{(n)}(x)$ is defined recursively by the formula $f_d^{(0)}(x)=0$ and $f_d^{(n)}(x)=\left(f_d^{(n-1)}(x)\right)^d+x$ for $n\geq 1$. Note that if the subscript $d$ is omitted, we assume that $d=2$.

The power series obtained by considering the limit of the iteration of zero under the $d$-multibrot polynomial $f^{(\infty)}_d(x)=\sum_{k}a_kx^{k+1}$ must satisfy
$$f^{(\infty)}_d(x)=\left(f^{(\infty)}_d(x)\right)^d+x.$$
Setting $z=f^{(\infty)}_d(x)$, we see that calculating the coefficients of $x$ in $f^{(\infty)}_d(x)$ is equivalent to computing the series reversion of the polynomial $x=z-z^d.$ With this in mind, here we introduce a version of the Lagrange Inversion formula to explicitly calculate the coefficients of $f_d^{(\infty)}(x)=\sum_{k=0}^{\infty}a_k x^{k+1}$.

\begin{theorem}[Lagrange Inversion Formula~\cite{Muller:1985ph}]\label{thm:inversion} Let $x$ be a (convergent) power series
$$x=z\left(1-\sum_{n=1}^{\infty}b_nz^n\right),$$
with reverse series
$$z=x\left(1+\sum_{n=1}^{\infty}a_nx^n\right).$$
Then the coefficients $a_n$ are given in terms of the $b_n$ by
$$a_n=\frac{1}{n+1}\sum_{\lambda} \binom{n+k}{k}\binom{k}{k_1,k_2,...,k_n}\prod_{j=1}^nb_j^{k_j},$$
where the sum is taken across all partitions $\lambda$ of $n$ into $k_j$ parts of size $j$ and $k$ total parts, e.g. across all nonnegative integer $n$-tuples $\{k_1,k_2,...,k_n\}$ such that
\begin{align*}
\sum_{j=1}^n k_j&=k\\
\sum_{j=1}^n k_j\cdot j&=n.
\end{align*}
\end{theorem}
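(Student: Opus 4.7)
The plan is to apply the Lagrange Inversion Formula (Theorem~\ref{thm:inversion}) to the polynomial $x = z - \sum_{i=1}^r z^{d_i}$ and then match the resulting sum, term by term, to the enumeration of dissections of type $\lambda$ provided by Theorems~\ref{thm:dissections2} and~\ref{thm:count1}.

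First, I rewrite the polynomial as $x = z\bigl(1 - \sum_{i=1}^{r} z^{d_i-1}\bigr)$, which is exactly the form required by Theorem~\ref{thm:inversion} with $b_j = 1$ when $j \in \{d_1-1,\, d_2-1,\, \ldots,\, d_r-1\}$ and $b_j = 0$ otherwise.  Substituting into the Lagrange formula, the factor $\prod_j b_j^{k_j}$ vanishes unless every positive $k_j$ occurs at a position $j = d_i - 1$ for some $i$.  Thus the sum collapses to partitions $\lambda$ of $n$ whose parts lie in $\{d_1-1, \ldots, d_r-1\}$, and these are precisely the partitions parameterizing the types of $(d_1, \ldots, d_r)$-dissections of a convex $(n+2)$-gon, since a part of size $d_i-1$ in $\lambda$ corresponds to a $(d_i+1)$-gon piece in the dissection.

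Next, I simplify the Lagrange weight for each such admissible $\lambda$ with $k_j$ parts of size $j$ and $k = \sum_j k_j$ total parts.  A direct expansion gives
\[
\frac{1}{n+1}\binom{n+k}{k}\binom{k}{k_1,\ldots,k_n} = \frac{(n+k)!}{(n+1)!\,\prod_j k_j!}.
\]
On the combinatorial side, Theorem~\ref{thm:dissections2} identifies $P_{\lambda,n}$ with $T_{{\bf r},n+k}$ for ${\bf r} = (n+1,\,0,\,k_1,\,\ldots,\,k_n)$, and Theorem~\ref{thm:count1} (applied with its outer parameter $n$ replaced by $n+k$, and noting $|{\bf r}| = k$ and ${\bf r}! = \prod_j k_j!$ since the $r_1 = 0$ slot contributes $0! = 1$) produces the very same expression
\[
T_{{\bf r},n+k} = \frac{1}{n+k+1}\cdot\frac{(n+k+1)_k}{\prod_j k_j!} = \frac{(n+k)!}{(n+1)!\,\prod_j k_j!}.
\]
Hence the Lagrange contribution of each admissible $\lambda$ equals $P_{\lambda,n}$.

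Summing this per-$\lambda$ identity over all partitions of $n$ with parts in $\{d_1-1, \ldots, d_r-1\}$ yields $a_n$ on the Lagrange side and the total count of $(d_1, \ldots, d_r)$-dissections of the $(n+2)$-gon on the combinatorial side, which completes the argument.  The main obstacle here is nothing conceptually deep; it is simply the careful alignment of three parallel indexings (partition part sizes $j$, polygon piece sizes $d_i+1$, and tree downdegrees $j+1$) together with the reindexing of Rhoades' formula from $n$ to $n+k$, so that the $(n+k+1)_k$ of Rhoades telescopes precisely into the $\binom{n+k}{k}\binom{k}{k_1,\ldots,k_n}/(n+1)$ of the Lagrange side.
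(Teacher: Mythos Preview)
The statement you were asked to prove is the Lagrange Inversion Formula itself (Theorem~\ref{thm:inversion}), which the paper does not prove at all---it is quoted from \cite{Muller:1985ph} as a known result. Your proposal does not prove this statement: you \emph{invoke} Theorem~\ref{thm:inversion} as a black box and use it to establish Theorem~\ref{thm:partitions} (Polygonal Partitions). So as a proof of the theorem actually quoted, there is a basic mismatch---you have proved an application of Lagrange inversion, not Lagrange inversion itself.

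If the intended target was Theorem~\ref{thm:partitions}, then your argument is correct and follows essentially the same route as the paper. The paper isolates the per-$\lambda$ count as a separate lemma (Lemma~\ref{lem:fixpart}), deriving $P_{\lambda,n}=\frac{1}{n+1}\binom{n+k}{k}\binom{k}{k_1,\ldots,k_r}$ from Theorems~\ref{thm:dissections2} and~\ref{thm:count1} exactly as you do, and then applies Theorem~\ref{thm:inversion} and matches terms. You have simply inlined that lemma; the logic, the reindexing $n\mapsto n+k$ in Theorem~\ref{thm:count1}, and the telescoping of $(n+k+1)_k$ into $\frac{1}{n+1}\binom{n+k}{k}\binom{k}{k_1,\ldots,k_n}$ are all the same.
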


As an immediate application of the Lagrange Inversion Formula, we produce the series reversion of the polynomial $x=z-z^d$:

\begin{theorem}\label{thm:main1} The polynomial $z=z^d+x$ has inverse series solution
$$z=\sum_{k=0}^{\infty} C_k^{(d)}x^{k(d-1)+1}.$$
\end{theorem}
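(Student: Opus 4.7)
The plan is to apply the Lagrange Inversion Formula (Theorem~\ref{thm:inversion}) directly. First I would rewrite the defining equation $z=z^d+x$ as $x=z-z^d=z(1-z^{d-1})$, putting it in precisely the form required by Theorem~\ref{thm:inversion} with the identification $b_{d-1}=1$ and $b_j=0$ for all $j\neq d-1$.

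Next, I would exploit the extreme sparsity of $\{b_j\}$. Because each term of the partition-sum in Theorem~\ref{thm:inversion} contains the factor $\prod_{j=1}^n b_j^{k_j}$, only partitions of $n$ whose parts are \emph{all} equal to $d-1$ survive. For such partitions, writing $k=k_{d-1}$ for the total number of parts, the constraint $\sum_j j\cdot k_j=n$ forces $n=k(d-1)$; the multinomial coefficient collapses to $\binom{k}{k}=1$; and the Lagrange formula reduces to
$$a_{k(d-1)}=\frac{1}{k(d-1)+1}\binom{k(d-1)+k}{k}=\frac{1}{k(d-1)+1}\binom{kd}{k}=C_k^{(d)},$$
while $a_n=0$ whenever $n$ is not a multiple of $d-1$.

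Finally, I would substitute back into the reverse series $z=x\bigl(1+\sum_{n\geq 1}a_nx^n\bigr)$. Reindexing by $k$, so that the coefficient $a_{k(d-1)}$ sits in front of $x^{k(d-1)+1}$, and absorbing the leading $x$ as the $k=0$ term (with $C_0^{(d)}=1$), yields $z=\sum_{k=0}^{\infty}C_k^{(d)}x^{k(d-1)+1}$, as claimed. I do not anticipate a genuine obstacle: the proof is essentially a one-line application of Lagrange inversion once one observes that almost every partition drops out. The only real care required is bookkeeping — keeping straight that the Lagrange-sum index $n$ and the Fuss-Catalan index $k$ are related by $n=k(d-1)$, and remembering that $a_n$ sits in front of $x^{n+1}$ rather than $x^n$.
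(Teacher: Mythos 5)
Your proposal is correct and follows exactly the paper's own argument: apply Lagrange inversion to $x=z(1-z^{d-1})$, observe that only $b_{d-1}=1$ is nonzero so the partition sum collapses to the single partition of $n=k(d-1)$ into $k$ parts of size $d-1$, and simplify $\frac{1}{k(d-1)+1}\binom{kd}{k}=C_k^{(d)}$. No differences worth noting.
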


\begin{proof}[Proof of Theorem~\ref{thm:main1}]
Fairly immediate from noting that only for $j=d-1$ are $b_j$ nonzero (specifically $b_{d-1}=1$). So all parts in partitions $\lambda$ contributing to the sum are of size $(d-1)$, and nonzero $a_n$ must be of form $n=k(d-1)$. From the Lagrange inversion formula in Theorem~\ref{thm:inversion}, these coefficients must then be:
\begin{align*}
a_n&=\frac{1}{n+1}\binom{n+k}{k}\\
&=\frac{1}{k(d-1)+1}\binom{k(d-1)+k}{k}\\
&=\frac{1}{k(d-1)+1}\binom{kd}{k}\\
&=C_k^{(d)}
\end{align*}
So the only nonzero terms in our series reversion are of the form
$$a_nx^{n+1}=a_{k(d-1)}x^{k(d-1)+1}=C_k^{(d)}x^{k(d-1)+1},$$
and we have our inverse series for $x=z-z^d$.
\end{proof}

As a corollary, we have that the coefficients of the infinitely iterated $d$-multibrot polynomials are given by the Fuss-Catalan numbers $C_k^{(d)}$. While this result was found and proved independently by the authors, the following statement appears to be fairly well-known for $d=2,3$ (see the OEIS at A001764. ) The formula holds in general for all $d\geq 2$.

\begin{corollary}[Coefficients of Infinitely Iterated $d$-multibrot Polynomials]\label{cor:maincoeff} Let $f_d^{(n)}(x)$ be defined recursively by the formula $f_d^{(0)}(x)=0$ and $f_d^{(n)}(x)=\left(f_d^{(n-1)}(x)\right)^d+x$ for $n\geq 1$, and set ${\displaystyle f_d^{(\infty)}(x)=\lim_{n\rightarrow\infty} \left(f^{(n)}_d(x)\right)}.$
Then
$$f_d^{(\infty)}(x)=\sum_{k=0}^{\infty} C_k^{(d)}x^{k(d-1)+1}.$$
\end{corollary}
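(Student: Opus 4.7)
The plan is to reduce this corollary to Theorem~\ref{thm:main1} by showing that $f_d^{(\infty)}(x)$ is a well-defined formal power series satisfying the fixed-point equation $z = z^d + x$. The content of the proof is therefore (i) giving sense to the limit in the ring $\mathbb{Z}[[x]]$, and (ii) passing to the limit in the defining recursion. The explicit coefficients then drop out of Theorem~\ref{thm:main1}.

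For convergence, I would work in the $x$-adic topology on $\mathbb{Z}[[x]]$. Writing $g_n := f_d^{(n)}(x)$, an easy induction on $n \geq 1$ shows that $g_n = x + O(x^d)$: indeed $g_1 = x$, and if $g_{n} = x + O(x^d)$ then $g_{n+1} = g_n^d + x = x + O(x^d)$ as well. The key observation is that the operator $T(g) = g^d + x$ is a contraction on the set of series of the form $x + O(x^2)$: if $g \equiv h \pmod{x^{N+1}}$ and both have leading term $x$, then $g^d \equiv h^d \pmod{x^{N+d}}$, and hence
$$T(g) \equiv T(h) \pmod{x^{N+d}}.$$
Applied to $g = g_{n}$ and $h = g_{n-1}$, and bootstrapping from the fact that $g_2 - g_1 = x^d$, this shows that the coefficient of $x^{N+1}$ in $g_n$ stabilizes for all $n$ large enough. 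Hence $f_d^{(\infty)}(x) := \lim_{n \to \infty} g_n$ exists as a formal power series starting with $x$.

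For the functional equation, passing to the $x$-adic limit in $g_{n+1} = g_n^d + x$ gives $f_d^{(\infty)} = \bigl(f_d^{(\infty)}\bigr)^d + x$. Setting $z = f_d^{(\infty)}(x)$ this is the relation $x = z - z^d$ whose reverse series is computed in Theorem~\ref{thm:main1}; since the reverse series with $z = x + O(x^2)$ is unique, we conclude
$$f_d^{(\infty)}(x) = \sum_{k=0}^{\infty} C_k^{(d)} x^{k(d-1)+1}.$$
The only real obstacle is the contraction argument justifying the limit — once the iterates are known to stabilize degree by degree, the rest of the corollary is a one-line appeal to Theorem~\ref{thm:main1}.
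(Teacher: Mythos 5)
Your proof is correct and follows essentially the same route as the paper: the paper's own argument is simply that $z=f_d^{(\infty)}(x)$ satisfies $z=z^d+x$ and then cites Theorem~\ref{thm:main1}. The only difference is that you additionally justify the existence of the limit via the $x$-adic contraction/stabilization argument, a point the paper takes for granted; that is a welcome but not substantively different addition.
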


\begin{proof}Immediate from Theorem~\ref{thm:main1} and the fact that $z=f^{(\infty)}_d(x)=\sum_{k}a_kx^{k+1}$ must satisfy
$$f^{(\infty)}_d(x)=\left(f^{(\infty)}_d(x)\right)^d+x,$$
or $z=z^d+x$. Note that this corollary could also be proved fairly directly via induction and the general recursion formula for the Fuss-Catalan series found in Theorem~\ref{thm:gencatrec}.
\end{proof}

As a further interesting note from this, the sum of the series formula for $z$ found in the Mandelbrot case gives a formula for the two fixed points of the (filled) Julia set ${\mathcal J}_x$ (the set of all points $z\in\CC$ such that the orbit of $0$ remains bounded under iterations by $f_x(z)=z^2+x$) for a fixed $x\in\CC$. See~\cite{milnor} for more details on dynamical systems and their fixed points.

\begin{remark}[Fixed Points of Filled Julia Sets ${\mathcal J}_x$] The series reversion of $z=z^2+x$ is
\begin{align*}
z&=\sum_{k=0}^{\infty} C_k x^{k+1}\\
&=x\sum_{k=0}^{\infty} C_k x^{k}\\
&=\frac{2x}{1+\sqrt{1-4x}}.
\end{align*}
For a fixed $x\in\CC$, the two complex values taken on by $\frac{2x}{1+\sqrt{1-4x}}$ each correspond to a separate fixed point of the Mandelbrot map, one each for the stable and unstable fixed point $z$ of $f_x(z)=z^2+x$ in ${\mathcal J}_x$.
\end{remark}

\section{Iterations of General Polynomials and Polygonal Dissections}\label{sec:main}

To return to polygonal partitions and their connections to the reversions of series, we note that Theorem~\ref{thm:main1} gives us immediately that $d$-dissections of polygons are counted by the coefficients of the series inverse of $x=z-z^d$.

\begin{corollary}\label{cor:main1} The coefficients $a_k$ of the series inversion $z=\sum_{k=0}^{\infty}a_kx^{k+1}$ of the polynomial $z=z^d+x$ count the number of $(d+1)$-gon polygonal partitions of a $(k+2)$-gon.
\end{corollary}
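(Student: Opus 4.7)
The plan is to chain together the two results already established in the excerpt: Theorem~\ref{thm:main1}, which gives the reverse series of $x = z - z^d$ explicitly in closed form, and Theorem~\ref{thm:bijection1}, which enumerates dissections of a convex polygon into $(d+1)$-gons via Fuss--Catalan numbers. The statement of the corollary uses the expansion $z = \sum_{k=0}^\infty a_k x^{k+1}$, which indexes \emph{every} power of $x$, whereas the two source theorems are naturally indexed by the number of tiles $j$; so the main task is to verify that the two indexings reconcile, including on the coefficients that turn out to be zero.

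First I would apply Theorem~\ref{thm:main1} to rewrite the reverse series as
\[
z \;=\; \sum_{j=0}^{\infty} C_j^{(d)}\, x^{\,j(d-1)+1}.
\]
Comparing with $z = \sum_{k=0}^{\infty} a_k\, x^{k+1}$ and matching coefficients, I would read off
\[
a_k \;=\; \begin{cases} C_j^{(d)} & \text{if } k = j(d-1) \text{ for some } j \geq 0, \\ 0 & \text{otherwise}. \end{cases}
\]

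Next I would invoke Theorem~\ref{thm:bijection1}, which says the number of dissections of a convex polygon into $j$ pieces that are each $(d+1)$-gons is $P_j^d = C_j^{(d)}$. A short edge-count check shows that such a dissection can occur on an $m$-gon only when $m = j(d-1)+2$: if the dissection uses $j-1$ diagonals (the dual graph is a tree on $j$ nodes), then $j(d+1) = m + 2(j-1)$, forcing $m = j(d-1)+2$. Setting $m = k+2$, the $(k+2)$-gon admits a $(d+1)$-gon dissection precisely when $k = j(d-1)$ for some $j\geq 0$, and the count in that case is $C_j^{(d)}$. When $k$ is not a multiple of $d-1$, no such dissection exists, so the count is $0$.

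Combining the two displays, $a_k$ equals the number of $(d+1)$-gon dissections of the $(k+2)$-gon in every case: both sides equal $C_j^{(d)}$ when $k = j(d-1)$, and both vanish otherwise. The only step requiring any care is the vertex-count argument that simultaneously rules out impossible $k$ on the combinatorial side and matches the zero coefficients on the analytic side; once that bookkeeping is in place the corollary is immediate, so I do not expect a genuine obstacle here.
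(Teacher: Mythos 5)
Your proposal is correct and follows essentially the same route as the paper: invoke Theorem~\ref{thm:main1} to identify the reverse-series coefficients with the Fuss--Catalan numbers, then invoke Theorem~\ref{thm:bijection1} to interpret those numbers as counts of $(d+1)$-gon dissections. The only difference is that you spell out the index reconciliation (that $a_k$ vanishes unless $k=j(d-1)$, matching the nonexistence of dissections for other $k$), which the paper leaves implicit; this is a welcome bit of added care but not a different argument.
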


\begin{proof}[Proof of Corollary~\ref{cor:main1}] From Theorem~\ref{thm:main1} we have that the coefficients of the series reversion of $x=z-z^d$ are the Fuss-Catalan numbers $C_n^{(d)}=\frac{1}{(d-1)n+1}\binom{nd}{d}$. The corollary is immediate from Theorem~\ref{thm:bijection1}, as the Fuss-Catalan numbers enumerate $d$-partitions of $(n+2)$-gons.
\end{proof}

This will be a special case of our main theorem:

\mainone*

We begin our proof with a lemma counting the number of polygonal dissections of a fixed type (with a fixed number of each type of $(d+1)$-gon appearing in the dissection.)

\begin{lemma}\label{lem:fixpart} Fix integers $2\leq d_1<d_2<\cdots<d_r$, integer $n\geq 0$, and a partition $\lambda$ of $n$ with parts of sizes $j\in\{d_1-1,d_2-1,...,d_r-1\}$. Let $k_j$ for $1\leq j\leq r$ be the number of times that $j$ appears in $\lambda$, and let $k$ be the total parts in $\lambda$, i.e.
\begin{align*}
n&=\sum_{j=1}^{r}(d_j-1)k_{j}\text{ and}\\
k&=\sum_{j=1}^rk_j.
\end{align*}
Then the number of all polygonal dissections of type $\lambda$ of an $(n+2)$-gon is given by
$$a_{\lambda}=\frac{1}{n+1}\binom{n+k}{k}\binom{k}{k_1,k_2,...,k_r}.$$
\end{lemma}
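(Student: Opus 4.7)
The plan is to combine the bijection of Theorem~\ref{thm:dissections2} with the enumeration formula of Theorem~\ref{thm:count1}. Theorem~\ref{thm:dissections2} identifies the dissections in $P_{\lambda,n}$ with rooted plane trees on $m+1=n+k+1$ vertices whose downdegree sequence has $n+1$ leaves and exactly $k_j$ internal vertices of downdegree $d_j$ for each $j=1,\ldots,r$, with no vertices of any other positive downdegree. (Each downdegree-$d_j$ vertex corresponds to one $(d_j+1)$-gon of the dissection.) It therefore suffices to enumerate these trees using Theorem~\ref{thm:count1}.

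First, let ${\bf r}=(r_1,r_2,\ldots,r_m)$ denote the Rhoades-style downdegree vector (with the leading entry $r_0$ suppressed), so that $r_{d_j}=k_j$ for $j=1,\ldots,r$ and all other entries vanish. Then
$$|{\bf r}|=\sum_{j=1}^{r}k_j=k,\qquad {\bf v}\cdot{\bf r}=\sum_{j=1}^{r}d_j k_j=\sum_{j=1}^{r}(d_j-1)k_j+\sum_{j=1}^{r}k_j=n+k=m,$$
so Theorem~\ref{thm:count1} applies (with its $n$ playing the role of our $m$), and the required leading coordinate $m-|{\bf r}|+1=n+1$ agrees with the number of leaves of the tree. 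The theorem then yields
$$a_\lambda=\frac{1}{m+1}\cdot\frac{(m+1)_{|{\bf r}|}}{{\bf r}!}=\frac{1}{n+k+1}\cdot\frac{(n+k+1)!}{(n+1)!\,k_1!\cdots k_r!}=\frac{(n+k)!}{(n+1)!\,k_1!\cdots k_r!}.$$

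To bring this into the form stated in the lemma, I would factor out $\frac{1}{n+1}$ and insert $k!/k!$:
$$\frac{(n+k)!}{(n+1)!\,k_1!\cdots k_r!}=\frac{1}{n+1}\cdot\frac{(n+k)!}{n!\,k!}\cdot\frac{k!}{k_1!\cdots k_r!}=\frac{1}{n+1}\binom{n+k}{k}\binom{k}{k_1,\ldots,k_r}.$$

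The only real obstacle is notational: the lemma indexes the multiplicities $k_j$ by position in the list $d_1<\cdots<d_r$, while Theorem~\ref{thm:count1} indexes them by the downdegree value itself. Once the identification $k_j=r_{d_j}$ is made explicit (and one notes that the suppressed coordinates of ${\bf r}$ are all zero, contributing only $0!=1$ factors to ${\bf r}!$), the remainder is the routine factorial identity displayed above.
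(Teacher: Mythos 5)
Your proposal is correct and takes essentially the same route as the paper: identify dissections of type $\lambda$ with rooted plane trees via Theorem~\ref{thm:dissections2}, count those trees with Theorem~\ref{thm:count1}, and finish with the same factorial rearrangement into $\frac{1}{n+1}\binom{n+k}{k}\binom{k}{k_1,\ldots,k_r}$. If anything, you are more explicit than the paper about verifying the hypotheses ${\bf v}\cdot{\bf r}=m$ and $m-|{\bf r}|+1=n+1$ and about reconciling the two indexing conventions for the multiplicities.
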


\begin{proof}[Proof of Lemma~\ref{lem:fixpart}] From Theorem~\ref{thm:dissections2}, we know that the number of polygonal dissections of type $\lambda$ above is in bijection with the set of rooted planar trees with downdegree sequence ${\bf r}=(n+1,0,k_1,k_2,...,k_n)$ and $n+k+1$ vertices. From Theorem~\ref{thm:count1}, we know that the count of such planar trees is
\begin{align*}
A_{\bf r}({\bf v})&=\frac{1}{1+n+k}\frac{(1+n+k)_{k}}{0!k_1!k_2!\cdots k_n!},\\
&=\frac{1}{n+k+1}\frac{(n+k+1)!}{k_1!k_2!\cdots k_n!(n+1)!},\\
&=\frac{1}{n+1}\frac{(n+k)!}{k_1!k_2!\cdots k_n!n!},\\
&=\frac{1}{n+1}\frac{(n+k)!}{k!\;n!}\frac{k!}{k_1!k_2!\cdots k_n},\\
&=\frac{1}{n+1}\binom{n+k}{k}\binom{k}{k_1,k_2,...,k_n}.
\end{align*}
This proves the count $a_{\lambda}$ given in the statement of the theorem.
\end{proof}

With this in hand, we return to the proof of the main theorem.

\begin{proof}[Proof of Theorem~\ref{thm:partitions}] Given an $(n+2)$-gon, any fixed partition $\lambda$ of $n$ into positive integer parts of sizes chosen from the set $\{d_1-1,d_2-1,...,d_r-1\}$ corresponds to some fixed \emph{type} of polygonal $(d_1,d_2,...,d_r)$-dissection. From Lemma~\ref{lem:fixpart}, we know that there are
$$a_{\lambda}=\frac{1}{n+1}\binom{n+k}{k}\binom{k}{k_{d_1-1},k_{d_2-1},...,k_{d_r-1}}$$
such dissections, where $k_{d_j-1}$ parts of size $d_j-1$ appear in partition $\lambda$. Note that we have changed from $k_j$ to $k_{d_j-1}$ to better match the notation used in the statement the Lagrange inversion theorem.

Examining our polynomial
$$x=z-\sum_{i=1}^r z^{d_i}=z\left(1-\sum_{i=1}^r z^{d_i-1}\right),$$
we see that in the notation of the Lagrange inversion formula given in Theorem~\ref{thm:inversion}, only the only nonzero $b_j$ are those with $j=d_i-1$ for some $1\leq i\leq r$. So the coefficients $a_n$ of the reverse series $z=\sum_{i=0}^{\infty} a_n x^{n+1}$ are of the form
$$a_n=\frac{1}{n+1}\sum_{\lambda} \binom{n+k}{k}\binom{k}{k_1,k_2,...,k_n},$$
where the sum is taken across partitions $\lambda$ of the form
$$n=k_{d_1-1}(d_1-1)+k_{d_2-1}(d_2-1)+\cdots+ k_{d_r-1}(d_r-1).$$
Note that
\begin{align*}
a_n&=\frac{1}{n+1}\sum_{\lambda} \binom{n+k}{k}\binom{k}{k_1,k_2,...,k_n}\\
&=\sum_{\lambda} \frac{1}{n+1}\binom{n+k}{k}\binom{k}{k_{d_1-1},k_{d_2-1},...,k_{d_r-1}}\\
&=\sum_{\lambda} a_{\lambda},
\end{align*}
and our coefficients $a_n$ can be calculated by summing over all possible types of dissections in ${\lambda}$, made from parts of size $(d+1)$, with $d\in\{d_1,d_2,...,d_r\}$. This completes the proof.
\end{proof}

\begin{example} Consider the polynomial $f(z)=z^3+z^2+x$. The coefficients of the infinitely iterated polynomial are given by the series reversion of $z=z^3+z^2+x$, or $x=z-z^3-z^2$:
$$z=x+x^2+3x^3+10x^4+38x^5+154x^6+654x^7+\cdots$$
\vskip1ex
These coefficients $a_n$ count the number of dissections of an $(n+2)$-gon into triangles (3-gons) and quadrilaterals (4-gons).

As there are no $2$-gons, there is one way to cover the empty object with triangles or squares, so the coefficient of $x$ is $a_0=1$. For $n=1,2,3$, we see:

\begin{figure}[h!]
\begin{center}
\begin{tikzpicture}[scale=0.5,auto=left,vertices/.style={circle, fill=black, inner sep=0.5pt}]
\draw(0.75,0)--++(120:1.5)--++(240:1.5)--cycle;
\node at (0,-1.5) {$a_1=1$};
\end{tikzpicture}\;\;\;\;\;\;\;\;\begin{tikzpicture}[scale=0.5,auto=left,vertices/.style={circle, fill=black, inner sep=0.5pt}]
\draw(0.75,0)--++(90:1.5)--++(180:1.5)--++(270:1.5)--cycle;

\draw(0.75-1.9,0)--++(90:1.5)--++(180:1.5)--++(270:1.5)--cycle;
\draw[color=MidnightBlue] (0.75,0)--++(135:2.12);

\draw(0.75+1.9,0)--++(90:1.5)--++(180:1.5)--++(270:1.5)--cycle;
\draw[color=MidnightBlue] (0.75+0.4,0)--++(45:2.12);

\node at (0,-1.5) {$a_2=3$};
\end{tikzpicture}\;\;\;\;\;\;\;\;\begin{tikzpicture}[scale=0.5,auto=left,vertices/.style={circle, fill=black, inner sep=0.5pt}]

\draw (0,1.8)--++(72:1)--++(144:1)--++(216:1)--++(288:1)--cycle;
\draw[color=MidnightBlue](0,1.8)++(144:1.6)--++(0:1.6);
\draw[color=MidnightBlue](0,1.8)++(144:1.6)--++(324:1.6);

\draw (2.1,1.8)--++(72:1)--++(144:1)--++(216:1)--++(288:1)--cycle;
\draw[color=MidnightBlue] (2.1,1.8)++(108:1.6)--++(252:1.6);
\draw[color=MidnightBlue] (2.1,1.8)++(108:1.6)--++(288:1.6);

\draw (4.2,1.8)--++(72:1)--++(144:1)--++(216:1)--++(288:1)--cycle;
\draw[color=MidnightBlue] (4.2,1.8)++(72:1)--++(180:1.6);
\draw[color=MidnightBlue] (4.2,1.8)++(72:1)--++(216:1.6);

\draw (6.3,1.8)--++(72:1)--++(144:1)--++(216:1)--++(288:1)--cycle;
\draw[color=MidnightBlue] (6.3,1.8)--++(144:1.6);
\draw[color=MidnightBlue] (6.3,1.8)--++(108:1.6);

\draw (8.4,1.8)--++(72:1)--++(144:1)--++(216:1)--++(288:1)--cycle;
\draw[color=MidnightBlue] (8.4,1.8)++(0:-1)--++(72:1.6);
\draw[color=MidnightBlue] (8.4,1.8)++(0:-1)--++(36:1.6);

\draw (0,0)--++(72:1)--++(144:1)--++(216:1)--++(288:1)--cycle;
\draw[color=MidnightBlue](0,0)++(144:1.6)--++(0:1.6);

\draw (2.1,0)--++(72:1)--++(144:1)--++(216:1)--++(288:1)--cycle;
\draw[color=MidnightBlue] (2.1,0)++(108:1.6)--++(288:1.6);

\draw (4.2,0)--++(72:1)--++(144:1)--++(216:1)--++(288:1)--cycle;
\draw[color=MidnightBlue] (4.2,0)++(72:1)--++(216:1.6);

\draw (6.3,0)--++(72:1)--++(144:1)--++(216:1)--++(288:1)--cycle;
\draw[color=MidnightBlue] (6.3,0)--++(144:1.6);

\draw (8.4,0)--++(72:1)--++(144:1)--++(216:1)--++(288:1)--cycle;
\draw[color=MidnightBlue] (8.4,0)++(0:-1)--++(72:1.6);

\node at (4,-0.75) {$a_3=10$};

\end{tikzpicture}
\end{center}
\caption{$(2,3)$-dissections of $n$-gons for $n=1,2,3$.}
\label{fig:dissection count}
\end{figure}
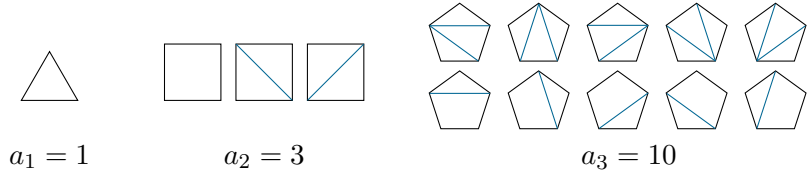
\end{example}

Extending this slightly, we have a power series whose reverse series has coefficients counting all dissections of an $(n+2)$-gon by noncrossing diagonals.

\begin{theorem}[Super-Catalan Numbers and Series Reversions]\label{thm:generaldissections}The power series $x=z-\sum_{j=1}^{\infty} z^{j}$ has reverse series $z=\sum_{k=0}^{\infty} a_n x^{n+1}$ where $s_n$ counts the all possible subsets of noncrossing diagonals a convex $(n+2)$-gon. The coefficient $s_n$ is given by
$$s_n=\frac{1}{n+1}\sum_{\lambda} \binom{n+k}{k}\binom{k}{k_1,k_2,....,k_n},$$
where the sum is taken across all partitions $\lambda$ of $n$ with $k_j$ parts of size $j$ and $k$ total parts.
\end{theorem}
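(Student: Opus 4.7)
The plan is to mimic the argument used for Theorem~\ref{thm:partitions}, but with an infinite family of allowed polygon sizes in place of the finite family $\{d_1,\ldots,d_r\}$. First, I would rewrite the generating polynomial (interpreting the indexing so that the series reversion exists, i.e.\ taking the sum from $j=2$) as
$$x = z - \sum_{j=2}^{\infty} z^{j} = z\left(1 - \sum_{j=1}^{\infty} z^{j}\right),$$
so that, in the notation of the Lagrange Inversion Formula (Theorem~\ref{thm:inversion}), we have $b_{j}=1$ for every $j\geq 1$. Substituting into the formula immediately gives
$$s_{n} = \frac{1}{n+1}\sum_{\lambda}\binom{n+k}{k}\binom{k}{k_{1},k_{2},\ldots,k_{n}},$$
where, because no $b_{j}$ vanishes, the sum runs over \emph{every} partition $\lambda$ of $n$ (with $k_{j}$ parts of size $j$ and $k = \sum k_{j}$ total parts). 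This establishes the closed formula for $s_{n}$.

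Next, I would interpret each term combinatorially by invoking Lemma~\ref{lem:fixpart}, whose proof applies verbatim to partitions whose parts are not restricted to a fixed subset: for each partition $\lambda$ of $n$, the quantity
$$a_{\lambda} = \frac{1}{n+1}\binom{n+k}{k}\binom{k}{k_{1},k_{2},\ldots,k_{n}}$$
counts the polygonal dissections of the $(n+2)$-gon of type $\lambda$. Consequently, the Lagrange expansion rearranges as $s_{n} = \sum_{\lambda} a_{\lambda}$, which simply sorts every polygonal dissection of the $(n+2)$-gon by its type before adding. Since every dissection of the $(n+2)$-gon is uniquely determined by (and determines) a subset of noncrossing diagonals -- the empty subset corresponding to the trivial dissection -- this sum enumerates precisely the subsets of noncrossing diagonals, as required.

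The main obstacle is conceptual rather than computational: one must justify passing from the finite generating polynomial of Theorem~\ref{thm:partitions} to an honest infinite power series. This is handled formally, since the Lagrange Inversion Formula is valid in the ring of formal power series over $\ZZ$ whenever $x = z\cdot u(z)$ with $u(0)=1$, which is satisfied by $u(z) = 1-\sum_{j\geq 1} z^{j} = (1-2z)/(1-z)$. Once this is settled, the partition-sum identity and Lemma~\ref{lem:fixpart} combine term-by-term exactly as in the proof of Theorem~\ref{thm:partitions}, and no new combinatorics is required.
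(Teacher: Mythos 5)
Your proposal is correct and takes essentially the same route as the paper: Lagrange inversion produces the sum over partitions $\lambda$ of $n$, and each term is identified with the type-$\lambda$ dissection count $a_\lambda$, so that $s_n=\sum_\lambda a_\lambda$ enumerates all polygonal dissections (equivalently, all subsets of noncrossing diagonals). The only minor difference is that the paper handles the infinite series by observing that parts of a partition of $n$ are at most $n$ and truncating to the finite polynomial with nonzero terms $z^2,\ldots,z^{n+1}$ before citing Theorem~\ref{thm:partitions}, whereas you invoke Lemma~\ref{lem:fixpart} directly and justify the formal inversion of the infinite series; both are valid.
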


\begin{proof} From the Lagrange inversion theorem, we know that the coefficient $s_n$ in the reverse power series of $x=z-\sum_{j=1}^{\infty}z^j$ must be of the form given in the statement of the theorem. All partitions $\lambda$ of $n$ contributing to the sum must have parts at \emph{most} $n$, so the $s_n$ above must be the same as the coefficient $a_n$ for the reverse series of the polynomial $x=z-\sum_{j=1}^{n+1} z^j$. From Theorem~\ref{thm:partitions}, we know that the coefficients $a_n$ of the reversion of the polynomial with nonzero terms $z^2,z^3,\cdots,z^{n+1}$ enumerate $(2,3,....,n+1)$-dissections of an $(n+2)$-gon -- a set which includes all possible polygonal dissections.
\end{proof}

The set of all polygonal dissections of an $(n+2)$-gon is counted by the super-Catalan numbers $s_n$ (also called the Schr\"{o}der-Hipparchus numbers.) (See~\cite{supercatalan} for an extensive list of other families of sets counted by $s_n$.) While several other formulas for the super-Catalan are known, Theorem~\ref{thm:generaldissections} gives a nice decomposition of $s_n$, summed across structures indexed by partitions $\lambda$ of $n$.

\section{Generalizations to Colored Dissections}\label{sec:main2}

The coefficients of slightly more general series reversions can be immediately interpreted using the formula in Theorem~\ref{thm:partitions}.

\begin{definition}A \emph{colored polygonal dissection} is a polygonal dissection where each $(d+1)$-gon appearing in the dissection can be assigned $b_{d}$ possible colors for $d\geq 2$.
\end{definition}

\begin{theorem}[Colored Polygonal Partitions]\label{thm:coloredpartitions}
The polynomial $x=z-\sum_{i=1}^r b_{d_i}z^{d_i}$ with $d_1>d_2>\cdots>d_r\geq 2$ and $b_{d_i}\geq 1$ for all $1\leq i\leq r$ has reverse series $z=\sum_{k=0}^{\infty} a_n x^{n+1}$ where $a_n$ counts the number of colored polygonal $(d_1,d_2,...,d_r)$-dissections of a convex $(n+2)$-gon.
\end{theorem}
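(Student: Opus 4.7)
The plan is to follow the proof of Theorem~\ref{thm:partitions} essentially verbatim, inserting the $b_{d_i}$ factors produced by the Lagrange Inversion Formula as a combinatorial coloring multiplier. First, I would rewrite the given polynomial as
$$x = z\Bigl(1 - \sum_{i=1}^r b_{d_i} z^{d_i-1}\Bigr),$$
so that in the notation of Theorem~\ref{thm:inversion} the only nonzero coefficients are $b_{d_i-1} = b_{d_i}$ for $1 \leq i \leq r$. Applying the Lagrange inversion formula then yields
$$a_n = \frac{1}{n+1}\sum_{\lambda} \binom{n+k}{k}\binom{k}{k_1,\ldots,k_n}\prod_{j=1}^n b_j^{k_j},$$
where the sum is indexed by partitions $\lambda$ of $n$ with $k_j$ parts of size $j$ and $k$ total parts, and the surviving summands are exactly those $\lambda$ whose parts are drawn from $\{d_1-1,\ldots,d_r-1\}$.

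Second, I would interpret each surviving summand combinatorially. For such a $\lambda$, the factor
$$\frac{1}{n+1}\binom{n+k}{k}\binom{k}{k_{d_1-1},\ldots,k_{d_r-1}}$$
equals $a_\lambda$ by Lemma~\ref{lem:fixpart}, and therefore counts the \emph{uncolored} polygonal dissections of type $\lambda$ of an $(n+2)$-gon. Since a part of size $d_i-1$ in $\lambda$ corresponds (under the convention in the definition of type) to a $(d_i+1)$-gon piece in the dissection, the remaining factor $\prod_{i=1}^r b_{d_i}^{k_{d_i-1}}$ is exactly the number of ways to independently assign one of $b_{d_i}$ colors to each of the $k_{d_i-1}$ pieces of size $d_i+1$. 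Multiplying these two factors therefore enumerates the colored dissections of type $\lambda$.

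Finally, summing over all admissible partitions $\lambda$ of $n$ partitions the set of colored $(d_1,\ldots,d_r)$-dissections of a convex $(n+2)$-gon by type and yields the claimed equality $a_n = \sum_\lambda a_\lambda \prod_i b_{d_i}^{k_{d_i-1}}$. I do not anticipate any real obstacle here: Theorem~\ref{thm:partitions} already expresses $a_n$ as a sum of the type-counts $a_\lambda$, and the Lagrange formula already weights each $\lambda$ by a monomial $\prod b_j^{k_j}$ in the polynomial coefficients, so the only new content is reinterpreting $b_{d_i}$ as a palette size. The sole bookkeeping point is the index shift between the exponent $d_i$ appearing in the polynomial and the part size $d_i-1$ appearing in $\lambda$, which is handled in the same manner as in the proof of Theorem~\ref{thm:partitions}.
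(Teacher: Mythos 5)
Your proposal is correct and follows essentially the same route as the paper: both apply the Lagrange Inversion Formula (Theorem~\ref{thm:inversion}) to the rewritten polynomial, identify the uncolored type-count $a_\lambda$ via Lemma~\ref{lem:fixpart}, and interpret the monomial $\prod_i b_{d_i}^{k_{d_i-1}}$ as the number of independent color assignments, summing over admissible types $\lambda$. The only difference is ordering (the paper defines the colored count $a_\lambda^{\ast}$ first and then matches it to the Lagrange expansion, whereas you expand first and interpret afterward), which is immaterial.
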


\begin{proof} From Lemma~\ref{lem:fixpart}, we know that the number of $(d_1,d_2,...,d_r)$-partitions of an $n$-gon with precisely $k_j$ of the $(d_j+1)$-gons appearing in the dissection for $1\leq j\leq r$ is given by
$$a_{\lambda}=\frac{1}{n+1}\binom{n+k}{k}\binom{k}{k_{d_1-1},k_{d_2-1},...,k_{d_r-1}}.$$
If each $(d_i+1)$-gon can be assigned one of $b_{d_i}$ colors, then there are
$$a_{\lambda}^{\ast}=\frac{1}{n+1}\binom{n+k}{k}\binom{k}{k_{d_1-1},k_{d_2-1},...,k_{d_r-1}}\prod_{i=1}^r b_{d_i}^{k_{d_i-1}}$$
such colored dissections, as we have $b_{d_i}$ choices for each of $k_{d_i-1}$ of the $(d_i+1)$-gons appearing in a given dissection.

As in the proof of Theorem~\ref{thm:partitions}, we have that the coefficients of the inverse series of the polynomial $x=z-\sum_{i=1}^r b_{d_i}z^{d_i}$ must be:
\begin{align*}
a_n&=\frac{1}{n+1}\sum_{\lambda} \binom{n+k}{k}\binom{k}{k_1,k_2,...,k_n}\prod_{j=1}^n b_{j+1}^{k_j}\\
&=\sum_{\lambda} \frac{1}{n+1}\binom{n+k}{k}\binom{k}{k_{d_1-1},k_{d_2-1},...,k_{d_r-1}}\prod_{i=1}^r b_{d_r}^{k_{d_r-1}}\\
&=\sum_{\lambda} a_{\lambda}^{\ast},
\end{align*}
completing our proof.
\end{proof}

\section{Further Questions}

This paper provides a complete combinatorial interpretation of series reversions of polynomials of the form
$$z=b_1z^{d_1}+b_2z^{d_2}+\cdots+b_rz^{d_r}+x$$
for positive integers $b_j$. As future work, we would be curious to see combinatorial approaches to the following question:

\begin{question} In general, given a pair of polynomials $g(z)$ and $h(x)$ with integer coefficients, is there a family of sets of objects ${\mathcal A}_{g,h}$ counted by the coefficients of the reversion of the power series $z=g(z)+h(x)$?
\end{question}

This question is answered here for $h(x)=x$ and $g(z)$ with \emph{positive} integer coefficients and all terms of degree at least two, but remains open in other cases. Series other than the generating functions of Catalan-type objects may appear as series inversions using similar iterative techniques, and we would be interested in seeing other classes of objects enumerated by such coefficients.

\bibliography{FussCatalan}
\bibliographystyle{plain}

\end{document}